\DeclareRobustCommand{\SkipTocEntry}[4]{}
\newcommand\@dotsep{4.5}
\def\@tocline#1#2#3#4#5#6#7{\relax
  \ifnum #1>\c@tocdepth 
  \else
    \par \addpenalty\@secpenalty\addvspace{#2}%
    \begingroup \hyphenpenalty\@M
    \@ifempty{#4}{%
      \@tempdima\csname r@tocindent\number#1\endcsname\relax
    }{%
      \@tempdima#4\relax
    }%
    \parindent\z@ \leftskip#3\relax \advance\leftskip\@tempdima\relax
    \rightskip\@pnumwidth plus1em \parfillskip-\@pnumwidth
    #5\leavevmode\hskip-\@tempdima #6\relax
    \leaders\hbox{$\m@th
      \mkern \@dotsep mu\hbox{.}\mkern \@dotsep mu$}\hfill
    \hbox to\@pnumwidth{\@tocpagenum{#7}}\par
    \nobreak
    \endgroup
  \fi}
\DeclareFontFamily{OT1}{rsfs}{}
\DeclareFontShape{OT1}{rsfs}{n}{it}{<-> rsfs10}{}
\DeclareMathAlphabet{\curly}{OT1}{rsfs}{n}{it}
\def\scup{\mathbin{\text{\scriptsize$\cup$}}}
\newcommand\M{\overline M}
\newcommand\proj{\mathbb P}
\newcommand\Q{\mathbb Q}
\newcommand\len{\ell}
\newcommand\com{\mathbb C}
\newcommand{\Sn}{\Sigma_n}
\newcommand{\SL}{\mathsf{SL}}
\newcommand{\rarr}{\rightarrow}
\newcommand\Into{\ar@{^{ (}->}[r]}
\newcommand\Aut{\operatorname{Aut}}
\newcommand\beq{\begin{equation}}
\newcommand\eeq{\end{equation}}
\newtheorem{thm}{Theorem}
\newtheorem{cor}[thm]{Corollary}
\newtheorem{prop}{Proposition}
\title{\textbf{Tautological and non-tautological 
cohomology of the moduli space of curves}}
\author{C. Faber and R. Pandharipande}
\date{January 2011}
\begin{document}

\begin{abstract} \noindent
After a short exposition of the basic properties
of the tautological ring of $\overline{M}_{g,n}$,
we explain three methods of detecting non-tautological
classes in cohomology.
The first is via curve counting over finite
fields.
The second is by obtaining length bounds on
the action of the symmetric
group $\Sn$ on tautological classes.
The third is via classical boundary
geometry. 
Several new non-tautological classes are found.
\end{abstract}

\maketitle

\setcounter{tocdepth}{1} 
\tableofcontents

\setcounter{section}{-1}
\section{Introduction}
\subsection{Overview}
Let ${\overline{M}_{g,n}}$ be the moduli space of 
Deligne-Mumford stable curves of
genus $g$ with $n$ marked points.
The cohomology of $\overline{M}_{g,n}$ has a distinguished subring of 
tautological classes 
$$RH^*(\overline{M}_{g,n}) \subset H^*(\overline{M}_{g,n}, \mathbb{Q})\ $$
studied extensively since Mumford's seminal article \cite{M}.
While effective methods for exploring the
tautological ring have been developed over the years,
the structure of the non-tautological
classes remains mysterious.
Our goal here, after 
reviewing the basic definitions and properties of
$RH^*(\overline{M}_{g,n})$ in Section \ref{ooo}, is to
present three approaches for detecting and studying non-tautological
classes.

\subsection{Point counting and modular forms}
Since the moduli space of  Deligne-Mumford stable curves is
defined over $\mathbb{Z}$, reduction to finite fields
$\mathbb{F}_q$ is well-defined.
  Let $\overline{M}_{g,n}(\mathbb{F}_q)$ 
denote the set of $\mathbb{F}_q$-points.
For various ranges of $g$, $n$, and $q$, counting the number
of points of $\overline{M}_{g,n}(\mathbb{F}_q)$
is feasible.
A wealth of information about $H^*(\overline{M}_{g,n}, \mathbb{Q})$
can be then obtained from the Lefschetz fixed point
formula applied to Frobenius.

The first examples where point
counting reveals non-tautological cohomology occur in genus 1.
The relationship between point counting
and elliptic modular forms is discussed in Section \ref{ttt}.
By interpreting the counting results in genus 2, 
a conjectural description
of the {\em entire}
cohomology of $\overline{M}_{2,n}$ has been found
by Faber and van der Geer \cite{FvdG} in terms of
Siegel modular forms.
The formula is consistent with
point counting data for  $n\leq 25$. In
fact, large parts of the formula have been proven.
The genus 2 results are presented in Section \ref{tttt3}.

In genus 3, a more complicated investigation involving
Teichm\"uller modular forms is required. The situation is
briefly summarized in Section \ref{gen333}. As the genus
increases, the connection between point counting
and modular forms becomes more difficult to understand.

\subsection{Representation theory}
The symmetric group $\Sn$ acts on $\overline{M}_{g,n}$
by permuting the markings.
As a result, a canonical $\Sn$-representation
on $H^*(\overline{M}_{g,n},\mathbb{Q})$ is obtained.

Studying the $\Sn$-action on $H^*(\overline{M}_{g,n},\mathbb{Q})$
provides a second approach
to the non-tautolog\-ical cohomology. In Section \ref{rt},
we establish an upper bound for the 
length{\footnote{The number of parts in the corresponding partition.}} of
the irreducible $\Sn$-representations occurring in the tautological ring
$R^*(\overline{M}_{g,n})$.
In many cases, the bound is sharp. Assuming the conjectural formulas
for $H^*(\overline{M}_{2,n},\mathbb{Q})$ 
obtained by point counting,
we find several classes of Hodge type
which are presumably algebraic
(by the Hodge conjecture)
but cannot possibly be tautological, because the length of
the corresponding $\Sigma_n$-representations is too large.
The first occurs in $\overline{M}_{2,21}$.

The proofs of the length bounds for the $\Sn$-action on $R^*(\overline{M}_{g,n})$
are obtained by studying  representations induced
from the boundary strata. The strong vanishing of Proposition 2 of
\cite{FPrel}
with tautological boundary terms plays a crucial role
in the argument.

\subsection{Boundary geometry}
The existence of non-tautological cohomology classes of
Hodge type was earlier
established by Graber and Pandharipande \cite{GP2}. In particular, 
 an explicit such class in the boundary of
 $\overline{M}_{2,22}$ was found.
In Section \ref{bg}, we revisit
the non-tautological boundary constructions. The 
old examples are obtained in simpler ways and new
examples in  $\overline{M}_{2,21}$ are found.
The method is by straightforward intersection theory
on the moduli space of curves. 
Finally, we connect the boundary constructions in
$\overline{M}_{2,21}$ to the representation investigation
of Section \ref{rt}.

\subsection{Acknowledgements}
We thank J. Bergstr\"om, C. Consani, G.~van der Geer, T. Graber, and A. Pixton
for many discussions related to the cohomology of the moduli
space of curves. The gentle encouragement of the
editors G. Farkas and I. Morrison played a helpful role.

C.F. was supported by
the G\"oran Gustafsson foundation for research in natural sciences
and medicine and grant 622-2003-1123 from the Swedish Research Council.
R.P. was partially supported by NSF grants DMS-0500187
and DMS-1001154.
The paper was 
completed 
while R.P. was visiting the 
Instituto Superior T\'ecnico in Lisbon with
 support from  a Marie Curie fellowship and
a grant from the Gulbenkian foundation.

\section{Tautological classes} \label{ooo}
\subsection{Definitions}
\label{defff}
Let ${\overline{M}_{g,n}}$ be the moduli space of stable curves of
genus $g$ with $n$ marked points
defined over ${\mathbb C}$.
Let $A^*(\overline{M}_{g,n},\mathbb{Q})$ denote the Chow ring.
The system of
tautological rings is defined{\footnote{We follow here
the definition of tautological classes in Chow given in \cite{FPrel}.
Tautological classes in cohomology are discussed in Section \ref{coh}.
}}
to be the set of smallest $\Q$-subalgebras of the Chow rings,
$$R^*(\overline{M}_{g,n}) \subset A^*(\overline{M}_{g,n},\mathbb{Q}),$$
satisfying the
following two properties:
\begin{enumerate}
\item[(i)] The system is closed under push-forward via
all maps forgetting markings:
$$\pi_*: R^*(\overline{M}_{g,n}) \rarr R^*(\overline{M}_{g,n-1}).$$
\item[(ii)] The system is closed under push-forward via
all gluing maps:
$$\iota_*: R^*(\overline{M}_{g_1,n_1\scup\{\star\}})
\otimes_{\Q}
R^*(\overline{M}_{g_2,n_2\scup\{\bullet\}}) \rarr
R^*(\overline{M}_{g_1+g_2, n_1+n_2}),$$
$$\iota_*: R^*(\overline{M}_{g, n\scup\{\star,\bullet\}}) \rarr
R^*(\overline{M}_{g+1, n}),$$
with attachments along the markings $\star$ and $\bullet$.
\end{enumerate}
While the definition appears restrictive,
natural algebraic constructions typically yield Chow classes
lying in the tautological ring.

\subsection{Basic examples}
Consider first 
the cotangent line classes.
For each marking $i$, let
$$\mathbb{L}_i\rightarrow \overline{M}_{g,n}$$ 
denote the associated cotangent line bundle.
By definition
$$\psi_i = c_1(\mathbb{L}_i) \in A^1(\overline{M}_{g,n}).$$
Let $\pi$ denote  the map forgetting the last marking,
\begin{equation}\label{bob}
\pi: \overline{M}_{g,n+1} \rarr \overline{M}_{g,n},
\end{equation}
and
let $\iota$ denote the gluing map,
$$\iota: \overline{M}_{g,\{1,2,\ldots,i-1,\star,i+1,\ldots, n\}} \times
\overline{M}_{0,\{\bullet,i,n+1\}} \longrightarrow \overline{M}_{g,n+1}.$$
The ${\mathbb Q}$-multiples of
the fundamental classes $[\overline{M}_{g,n}]$  
are
contained in the tautological rings (as ${\mathbb Q}$-multiples of the 
units in the subalgebras).
A direct calculation shows:
$$-\pi_*\Big( \big(\iota_*([\overline{M}_{g,n}] \times 
[\overline{M}_{0,3}])\big)^2\Big) = \psi_i.$$
Hence, the cotangent line classes lie in the tautological rings,
$$\psi_i \in R^1(\overline{M}_{g,n})\ .$$

Consider next the $\kappa$ classes  
defined via push-forward by the forgetful map \eqref{bob}, 
$$\kappa_j= \pi_*(\psi_{n+1}^{j+1}) \in A^j(\overline{M}_{g,n})\ .$$
Since  $\psi_{n+1}$ has already been shown to be tautological,
the $\kappa$ classes are
tautological by property (i),
$$\kappa_j \in R^j(\overline{M}_{g,n})\ .$$
The $\psi$ and $\kappa$ classes are
very closely related.

For a nodal curve $C$, let $\omega_C$ denote the dualizing sheaf.
The Hodge bundle $\mathbb{E}$ over $\overline{M}_{g,n}$ 
is the rank $g$ vector bundle with fiber
$H^0(C,\omega_C)$ over the moduli point
$$[C,p_1,\ldots,p_n] \in \overline{M}_{g,n} \ .$$
The $\lambda$ classes are defined by
$$\lambda_k= c_k(\mathbb{E}) \in A^k(\overline{M}_{g,n}) \ .$$
The Chern characters of $\mathbb{E}$ lie in
the tautological ring by 
 Mumford's 
Grothendieck-Riemann-Roch computation \cite{M}.
Since the $\lambda$ classes are polynomials in the
Chern characters,
$$\lambda_k \in R^k(\overline{M}_{g,n})\ .$$

The $\psi$ and $\lambda$
classes are basic elements of the tautological
ring $R^*(\M_{g,n})$ arising very often in geometric
constructions and calculations. 
The $\psi$ integrals,
\begin{equation}\label{fffr}
\int_{\M_{g,n}} \psi_1^{a_1} \cdots \psi_{n}^{a_{n}}, 
\end{equation}
can be 
evaluated by KdV constraints due to Witten and Kontsevich
\cite{Kon,W}.
Other geometric approaches to the $\psi$ integrals can be found in
\cite{Mir,OP}.
In genus 0, the simple formula
$$\int_{\M_{0,n}} \psi_1^{a_1} \cdots \psi_{n}^{a_{n}} = \binom{n-3}{a_1,\ldots,a_n}$$
holds. Fractions occur for the first time in genus 1,
$$\int_{\M_{1,1}} \psi_1 = \frac{1}{24} \ .$$
Hodge integrals,
when the $\lambda$ classes are included,
$$\int_{\M_{g,n}} \psi_1^{a_1} \cdots \psi_{n}^{a_{n}} \lambda_1^{b_1}
\ldots \lambda_g^{b_g}, $$
can be reduced to $\psi$ integrals \eqref{fffr} by Mumford's 
Grothendieck-Riemann-Roch calculation. An example
of a Hodge integral evaluation{\footnote{Here, $B_{2g}$
denotes the Bernoulli number.}} is
\begin{equation}\label{gthh3}
\int_{\overline{M}_g} \lambda_{g-1}^3 = \frac{|B_{2g}|}{2g}\frac{|B_{2g-2}|}{2g-2}
\frac{1}{(2g-2)!}
\end{equation}
The proof of \eqref{gthh3} and
several other exact formulas for Hodge integrals can be
found in \cite{FP}.

\label{intt}

\subsection{Strata}
The boundary
strata of the moduli spaces of curves correspond
to {\em stable graphs} $$A=(V, H,L, g:V \rarr {\mathbb Z}_{\geq 0}, a:H\rarr V, i: H\rarr H)$$
satisfying the following properties:
\begin{enumerate}
\item[(1)] $V$ is a vertex set with a genus function $g$,
\item[(2)] $H$ is a half-edge set equipped with a
vertex assignment $a$ and fixed point free involution
$i$,
\item[(3)] $E$, the edge set, is defined by the
orbits of $i$ in $H$, 
\item[(4)] $(V,E)$ define a {\em connected} graph,
\item[(5)] $L$ is a set of numbered legs attached to the vertices,
\item[(6)] For each vertex $v$, the stability condition holds:
$$2g(v)-2+ n(v) >0,$$
where $n(v)$ is the valence of $A$ at $v$ including both half-edges and legs.
\end{enumerate}
The data of the topological type of a generic curve in a boundary
stratum of the moduli space is recorded in the stable graph.

Let $A$ be a stable graph. The genus of $A$ is defined by
$$g= \sum_{v\in V} g(v) + h^1(A).$$
Define the moduli space
$\M_A$ by the product
$$\M_A =\prod_{v\in V(A)} \M_{g(v),n(v)}.$$
There is a
canonical
morphism{\footnote{
To construct $\xi_A$,
a family of stable pointed curves over $\M_A$ is required.  Such a family
is easily defined
by attaching the pull-backs of the universal families over each of the
$\M_{g(v),n(v)}$  along the sections corresponding to half-edges.}} 
$$\xi_{A}: \M_{A} \rarr \M_{g,n}$$ 
with image equal to the boundary stratum
associated to the graph $A$.  
By repeated use of property (ii),
$$\xi_{A*}[\M_A] \in R^*(\M_{g,n}) \ .$$

We can now describe a set of additive generators for
$R^*(\overline{M}_{g,n})$.
Let~$A$ be a stable graph of genus $g$ with $n$ legs.
For each vertex
$v$ of $A$, let
$$\theta_v\in R^*(\overline{M}_{g(v), n(v)})$$ be an
arbitrary monomial in the
$\psi$ and $\kappa$ classes of the vertex moduli space.
The following result is proven in \cite{GP2}.

\begin{thm}\label{maude}
 $R^*(\overline{M}_{g,n})$ is generated additively by classes of the form
$$\xi_{A*}\Big(\prod_{v\in V(A)} \theta_v\Big).$$
\end{thm}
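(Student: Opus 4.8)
The plan is to realize every tautological class as a combination of decorated strata classes by pushing the generating operations (forgetful pushforward, gluing pushforward, product) through the claimed generating set. Concretely, I would let $\mathcal{S} \subset \bigoplus_{g,n} R^*(\overline{M}_{g,n})$ denote the $\Q$-span of all classes $\xi_{A*}\big(\prod_v \theta_v\big)$ with $A$ a stable graph and $\theta_v$ a monomial in $\psi$ and $\kappa$ classes on $\overline{M}_{g(v),n(v)}$. Since $R^*$ is, by definition, the smallest system of subalgebras closed under (i) forgetful pushforward and (ii) the two gluing pushforwards, and since the trivial graph with a single vertex and its monomials already produces all $\psi$- and $\kappa$-monomials on $\overline{M}_{g,n}$ (which contain the fundamental classes), it suffices to check that $\mathcal{S}$ is itself closed under these operations. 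That is, I would prove $\mathcal{S}$ is a system of subalgebras stable under (i) and (ii); minimality then forces $R^*(\overline{M}_{g,n}) \subseteq \mathcal{S}$, and the reverse inclusion is automatic because each $\xi_{A*}(\prod_v\theta_v)$ lies in $R^*$ by repeated use of property (ii) together with the fact that $\psi,\kappa \in R^*$.

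The work is in three stability checks. First, for \emph{gluing} pushforward $\iota_*$: gluing two decorated strata $(\xi_{A*}\prod\theta_v)$ and $(\xi_{B*}\prod\theta_w)$ along markings $\star,\bullet$, or self-gluing one such class, is modeled by forming the new stable graph obtained by adding an edge joining the corresponding half-edge legs of $A$ (and $B$), and carrying along the same vertex decorations $\theta_v$. Compatibility of the maps $\xi_A$ under this edge-addition operation is essentially the construction of $\xi_A$ itself (attaching universal families along sections), so $\iota_*$ sends generators to generators and $\mathcal{S}$ is closed under (ii). Second, $\mathcal{S}$ is a \emph{subalgebra}: a product $\xi_{A*}(\alpha)\cdot\xi_{B*}(\beta)$ is computed by the excess intersection / self-intersection formula for the boundary maps $\xi_A$, which expresses the intersection of two boundary strata as a sum over common degenerations, again decorated only by $\psi$ and $\kappa$ classes (the normal bundles of the boundary divisors contribute $\psi$-classes at the nodes, and the diagonal classes contribute further $\psi$-monomials). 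This keeps us inside $\mathcal{S}$. Third, for \emph{forgetful} pushforward $\pi_*\colon \overline{M}_{g,n+1}\to\overline{M}_{g,n}$: given $\xi_{A*}(\prod_v\theta_v)$ on $\overline{M}_{g,n+1}$, the leg $n+1$ sits on some vertex $v_0$ of $A$, and I would analyze $\pi_*$ on the vertex moduli space $\overline{M}_{g(v_0),n(v_0)}$ by the standard comparison of $\psi$-classes under forgetting a point (the relations $\psi_i = \pi^*\psi_i + [D_{i,n+1}]$ and the projection/string-type formulas $\pi_*(\psi_{n+1}^{a+1}) = \kappa_a$), together with the fact that if forgetting the $(n+1)$st leg destabilizes $v_0$ then the contracted graph is again a stable graph. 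In every case the output is a sum of decorated strata, so $\mathcal{S}$ is closed under (i).

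The main obstacle I anticipate is the bookkeeping in the self-intersection/excess-intersection step: controlling exactly which $\psi$-monomials the normal bundles and diagonals of the $\xi_A$ contribute when two boundary strata meet non-transversally, and verifying that no classes outside the $\psi$–$\kappa$ algebra on the vertex factors are ever introduced. This is where one needs the precise formula for $\xi_A^*\xi_{A*}$ (pullback-pushforward along a boundary stratum) expressed in $\psi$-classes at the half-edges, and the analogous description of how two stable graphs $A,B$ determine their common degenerations. Once that combinatorial/intersection-theoretic lemma is in hand, the remaining verifications that $\mathcal{S}$ is a system of subalgebras closed under (i) and (ii) are formal, and the theorem follows by the minimality in the definition of $R^*$.
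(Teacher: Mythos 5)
Your proposal is correct and follows essentially the same strategy as the proof the paper cites (the theorem is proven in [GP2], not in the paper itself): one shows the span of decorated strata classes is a system of subalgebras closed under the forgetful and gluing pushforwards, with the key technical input being the excess-intersection formula expressing $\xi_A^*\xi_{B*}$ in terms of $\psi$-classes at the nodes, and concludes by minimality. The obstacle you flag -- the combinatorics of common degenerations and the $\psi$-class contributions of the normal bundles -- is exactly the content of the appendix of [GP2].
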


By the dimension grading, the list of generators provided
by Theorem \ref{maude} is finite. 
Hence, we obtain the following result.

\begin{cor} \label{fdr}
We have
$\dim_{\mathbb{Q}} \, R^*(\overline{M}_{g,n}) < \infty$.
\end{cor}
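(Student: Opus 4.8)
The plan is to deduce the corollary from Theorem \ref{maude} by observing that the generating set it provides is finite. First I would use that $\overline{M}_{g,n}$ is a proper Deligne--Mumford stack of dimension $3g-3+n$, so $A^d(\overline{M}_{g,n},\mathbb{Q})=0$ for $d>3g-3+n$; hence $R^*(\overline{M}_{g,n})=\bigoplus_{d=0}^{3g-3+n}R^d(\overline{M}_{g,n})$ is concentrated in finitely many degrees, and it is enough to bound the number of additive generators in each fixed degree $d$.

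Next I would control the generators $\xi_{A*}\big(\prod_{v\in V(A)}\theta_v\big)$ of Theorem \ref{maude} lying in a given degree $d\le 3g-3+n$. Such a class has codimension $|E(A)|+\sum_{v}\deg\theta_v$, since $\xi_A$ has image of codimension $|E(A)|$ and each monomial $\theta_v$ contributes its own degree; so to appear in degree $d$ one needs $|E(A)|\le d$ and $\sum_v\deg\theta_v\le d$. There are only finitely many stable graphs $A$ of genus $g$ with $n$ legs and at most $d$ edges: the stability inequalities $2g(v)-2+n(v)>0$, together with $\sum_v g(v)+h^1(A)=g$ and $h^1(A)=|E(A)|-|V(A)|+1$, give a priori bounds on $|V(A)|$, on the genus function, and on the number of half-edges, leaving only finitely many isomorphism types. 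For each such $A$ and each vertex $v$, the moduli space $\overline{M}_{g(v),n(v)}$ carries exactly $n(v)$ cotangent line classes $\psi_1,\dots,\psi_{n(v)}$, and a monomial $\theta_v$ of degree at most $d$ can involve only the finitely many $\kappa_j$ with $j\le d$; hence there are finitely many admissible choices of $\theta_v$.

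Combining the two steps, in each of the finitely many degrees $d=0,\dots,3g-3+n$ the generating set of Theorem \ref{maude} restricted to that degree is finite, so $R^*(\overline{M}_{g,n})$ is a finite-dimensional $\mathbb{Q}$-vector space. The argument is essentially bookkeeping; the only point requiring a small computation is the combinatorial claim that stable graphs of bounded genus, leg number, and edge number form a finite set, which follows from a routine Euler-characteristic estimate. That is the closest thing to an obstacle here.
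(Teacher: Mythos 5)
Your proposal is correct and follows exactly the paper's (very terse) argument: the paper simply notes that "by the dimension grading, the list of generators provided by Theorem \ref{maude} is finite," and your write-up just fills in the routine bookkeeping behind that sentence. No further comment is needed.
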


\subsection{Further properties}
\label{furth}
The following two formal properties of the full system of
tautological rings 
$$R^*(\overline{M}_{g,n}) \subset A^*(\overline{M}_{g,n},\mathbb{Q}),$$
are a consequence of properties (i) and (ii):

\begin{enumerate}
\item[(iii)] The system is closed under pull-back via 
the forgetting and gluing maps.

\item[(iv)] $R^*(\overline{M}_{g,n})$ is an ${\mathbb{S}}_n$-module
via the permutation action on the markings.
\end{enumerate}

Property (iii) follows from the well-known boundary
geometry of the moduli space of curves. A
careful treatment using the additive basis of Theorem \ref{maude}
can be found in \cite{GP2}. 
The meaning of property (iii) for the reducible
gluing is that the K\"unneth components of the pull-backs
of tautological classes are tautological.
Since 
the defining properties (i) and (ii) are
symmetric with respect to the marked points,
property (iv) holds.

\subsection{Pairing}
Intersection theory on the moduli space $\overline{M}_{g,n}$
yields a canonical pairing
$$\mu: R^{k}(\M_{g,n}) \times R^{3g-3+n-k}(\M_{g,n}) \rightarrow \mathbb{Q}$$
defined by
$$\mu(\alpha,\beta) = \int_{\M_{g,n}} \alpha \scup \beta \ .$$
While the pairing $\mu$ has been speculated to be perfect in 
\cite{FPlog,HL,P}, very few results are known.

The pairing $\mu$ can be effectively computed on the
generators of Theorem \ref{maude} by the following
method.
The pull-back property (iii) may be used repeatedly to
reduce the calculation of $\mu$ on
the generators to integrals of the form
$$\int_{\M_{h,m}} \psi_1^{a_1} \cdots \psi_{m}^{a_m} \cdot
\kappa_1^{b_1} \cdots \kappa_r^{b_r} \ . $$
By a well-known calculus, the $\kappa$ classes can be removed
to yield a sum of purely $\psi$ integrals
$$\int_{\M_{h,m+r}} \psi_1^{a_1} \cdots \psi_{m+r}^{a_{m+r}}, $$
see \cite{AC}. As discussed in Section \ref{intt}, the
$\psi$ integrals can be evaluated by KdV constraints.

\subsection{Further examples}
We present here two geometric constructions which also
yield tautological classes.
The first is via stable maps  and the
second via moduli spaces of Hurwitz covers.

Let $X$ be a nonsingular projective variety, and let
$\overline{M}_{g,n}(X,\beta)$ be the moduli space of stable
maps{\footnote{We refer the reader to \cite{FulP}
for an introduction to the subject. A discussion of
obstruction theories and virtual classes can be found in \cite{B,BF,LiT}.}} 
representing $\beta\in H_2(X,{\mathbb Z})$.
Let $\rho$ denote the map to the moduli of curves,
$$\rho: \overline{M}_{g,n}(X,\beta) \rarr {\overline M}_{g,n}.$$
The moduli space $\overline{M}_{g,n}(X,\beta)$
carries a virtual class 
$$[\overline{M}_{g,n}(X,\beta)]^{vir} \in A_*(\overline{M}_{g,n}(X,\beta))$$
obtained from the canonical
obstruction theory of maps. 

\begin{thm} \label{tttt}
Let $X$ be a nonsingular projective toric
variety. Then,
$$\rho_* [\overline{M}_{g,n}(X,\beta)]^{vir} \in 
R^*(\overline{M}_{g,n}).$$
\end{thm}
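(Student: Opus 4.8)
The plan is to exploit the torus action on $X$ together with the virtual localization formula of Graber and Pandharipande. Since $X$ is a nonsingular projective toric variety, its dense torus $T=(\C^*)^{\dim X}$ acts with isolated fixed points and finitely many invariant curves, and the induced $T$-action on $\overline{M}_{g,n}(X,\beta)$ makes $\rho$ equivariant for the trivial action on $\overline{M}_{g,n}$. Virtual localization then expresses the equivariant virtual class as
$$[\overline{M}_{g,n}(X,\beta)]^{vir}\ =\ \sum_{\Gamma}\ \iota_{\Gamma*}\!\left(\frac{[F_\Gamma]^{vir}}{e(N_\Gamma^{vir})}\right),$$
where $\Gamma$ runs over the finitely many decorated graphs indexing the $T$-fixed loci $F_\Gamma\subset\overline{M}_{g,n}(X,\beta)$, the identity holding in $T$-equivariant Chow after inverting the equivariant parameters $t_1,\dots,t_{\dim X}$.

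Next I would use the explicit description of the fixed loci. Up to a finite group of automorphisms of $\Gamma$, each $F_\Gamma$ is a product $\prod_v \overline{M}_{g(v),n(v)}$ indexed by the vertices of $\Gamma$ over which the source curve has a contracted component, the remaining source components being rigid multiple covers of $T$-invariant lines. These loci are unobstructed of expected dimension, so $[F_\Gamma]^{vir}$ is the fundamental class of the product, while the standard edge/vertex/flag analysis exhibits $1/e(N_\Gamma^{vir})$ as a polynomial in the $\psi$ classes at the nodes and the Hodge Chern classes $\lambda_k$ of the vertex factors, with coefficients Laurent polynomials in the $t_i$. As $\psi$, $\kappa$ and $\lambda$ classes are tautological (Section \ref{ooo}), the contribution of $\Gamma$ is a tautological class on $\prod_v \overline{M}_{g(v),n(v)}$ tensored with $\Q(t_1,\dots,t_{\dim X})$. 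Moreover, the restriction of $\rho$ to $F_\Gamma$ sends a family of vertex curves to the stabilization of the assembled source curve; forgetting the map data, this stabilization is governed by a stable graph $A$, so $\rho|_{F_\Gamma}$ is the gluing morphism $\xi_A$, possibly precomposed with forgetful morphisms and with the finite quotient. By properties (i) and (ii) — closure of the tautological system under forgetful and gluing push-forwards, the finite quotient contributing only a rational scalar — each $\rho_*\iota_{\Gamma*}\big([F_\Gamma]^{vir}/e(N_\Gamma^{vir})\big)$ lies in $R^*(\overline{M}_{g,n})\otimes_\Q\Q(t_1,\dots,t_{\dim X})$.

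Summing the finitely many graph contributions, $\rho_*[\overline{M}_{g,n}(X,\beta)]^{vir}$ lies in $R^*(\overline{M}_{g,n})\otimes_\Q\Q(t_1,\dots,t_{\dim X})$, where $R^*(\overline{M}_{g,n})$ is finite-dimensional over $\Q$ by Corollary \ref{fdr}. On the other hand, this same class is the constant term (setting all $t_i=0$) of the equivariant push-forward of the \emph{non-localized} equivariant virtual class, an honest polynomial in the $t_i$ with coefficients in $A^*(\overline{M}_{g,n})$. A polynomial element of $R^*(\overline{M}_{g,n})\otimes_\Q\Q(t_1,\dots,t_{\dim X})$ has all of its $t_i$-coefficients in $R^*(\overline{M}_{g,n})$, so in particular $\rho_*[\overline{M}_{g,n}(X,\beta)]^{vir}\in R^*(\overline{M}_{g,n})$, which is the assertion. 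The step I expect to be the main obstacle is the bookkeeping of the equivariant parameters — individual localization contributions acquire poles that cancel only in the total sum — together with the precise identification of $\rho|_{F_\Gamma}$ with a composite of gluing and forgetful morphisms; the geometric heart of the matter, that every $T$-fixed locus is essentially a product of moduli spaces of stable curves, is precisely what forces the answer into the tautological ring.
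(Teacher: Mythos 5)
Your proposal is correct and follows exactly the route the paper intends: the paper's entire proof of Theorem \ref{tttt} is the single remark that it ``follows directly from the virtual localization formula of \cite{GP}'', and your write-up is precisely the standard unpacking of that remark (fixed loci as products of $\overline{M}_{g(v),n(v)}$, inverse Euler classes built from $\psi$ and $\lambda$ classes, $\rho|_{F_\Gamma}$ factoring through gluing and forgetful maps, and the nonequivariant limit recovering the honest class). The only cosmetic caveat is that $1/e(N_\Gamma^{vir})$ is a rational function of the $t_i$ rather than Laurent polynomial before expansion, but this does not affect the argument.
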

The proof follows directly from the virtual
localization formula of \cite{GP}.
If $[C]\in M_g$ is a general moduli point, then
 $$\rho_* [\overline{M}_{g}(C,1)]^{vir}= [C] \in 
A^{3g-3}(\overline{M}_{g}).$$
Since not all of $A^{3g-3}(\overline{M}_g)$ is expected to be
tautological,
Theorem \ref{tttt} is unlikely to hold for $C$. However, 
the result perhaps holds for a much more general class
of varieties $X$. 
In fact,
we do not know{\footnote{
We do not know
any examples at all of nonsingular projective $X$ where
$$\rho_* [\overline{M}_{g,n}(X,\beta)]^{vir} \notin 
RH^*(\overline{M}_{g,n}).$$
See \cite{LP} for a discussion.}}
any nonsingular projective variety
defined over $\bar{\mathbb Q}$ for which
Theorem \ref{tttt} is expected to be false.

The moduli spaces of Hurwitz covers of $\proj^1$
also define natural classes on the moduli space of curves. 
Let $\mu^1, \ldots, \mu^m$ be $m$ partitions of equal size $d$ satisfying
$$2g-2+2d = \sum_{i=1}^m \Big( d- \len(\mu^i) \Big),$$
where $\ell(\mu^i)$ denotes the length of the partition $\mu^i$.
The moduli space of Hurwitz covers, $$H_{g}(\mu^1,\ldots,\mu^m)$$
parameterizes morphisms,
$$f: C \rightarrow  \mathbb{P}^1,$$
where $C$ is
a complete, connected, nonsingular curve 
with marked profiles $\mu^1, \ldots, \mu^m$ over $m$ ordered points of the
target (and no ramifications elsewhere).  
The moduli space of Hurwitz covers is a dense open set of 
the compact 
moduli space of admissible covers \cite{HM},
$$H_{g}(\mu^1,\ldots,\mu^m) \subset  
\overline{H}_{g}(\mu^1,\ldots,\mu^m).$$
Let $\rho$ denote the map to the moduli of curves,
$$\rho: \overline{H}_{g}(\mu^1,\ldots,\mu^m) \rightarrow {\overline M}_{g, 
\sum_{i=1}^m \len(\mu^i)}.$$
The following is a central result of \cite{FPrel}.

\begin{thm} \label{hht}
The push-forwards of the fundamental classes lie in
the tautological ring,
$$\rho_*[\overline{H}_{g}(\mu^1,\ldots,\mu^m)] \in 
R^*(\overline{M}_{g,\sum_{i=1}^m \len(\mu^i)})\ . $$
\end{thm}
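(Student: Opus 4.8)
The plan is to reduce the statement to the already-established tautological results by analyzing the boundary structure of the moduli space of admissible covers. First I would recall that $\overline{H}_{g}(\mu^1,\ldots,\mu^m)$ is itself smooth (as a stack) and that its boundary divisors, and more generally its boundary strata, are again products of moduli spaces of admissible covers of the same combinatorial type, glued along marked points. Thus the geometry of $\overline{H}_g$ mirrors the geometry of $\overline{M}_{g,n}$: there is a natural stratification by the topological type of the target nodal genus-$0$ curve together with the admissible cover data over it.

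The key geometric input is a degeneration argument. I would degenerate the target $\proj^1$ to a chain (or more general tree) of $\proj^1$'s, distributing the $m$ branch points among the components. Under this degeneration the admissible cover $C\to\proj^1$ breaks up, and the pushforward $\rho_*[\overline{H}_g(\mu^1,\ldots,\mu^m)]$ is expressed, via the projection formula and the compatibility of $\rho$ with gluing maps, as a sum of gluing-pushforwards of products of classes $\rho_*[\overline{H}_{g'}(\cdots)]$ coming from the pieces, each a moduli space of admissible covers of $\proj^1$ with strictly fewer branch points (and with the intermediate profiles over the nodes summed over all partitions of $d$). The base case is $m$ small — in particular $m=3$, where $\overline{H}_g(\mu^1,\mu^2,\mu^3)$ is a finite cover of a point $\overline{M}_{0,3}$, so its pushforward to the appropriate $\overline{M}_{g,n}$ is, up to combinatorial factors, a pushforward of a fundamental class under gluing maps from unpointed-target pieces, hence manifestly tautological by property (ii) and Theorem \ref{maude}. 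The main obstacle is organizing this degeneration/induction cleanly: one must check that the intermediate covers over the nodes really do contribute admissible-cover classes of the same form (so the induction closes), keep careful track of the automorphisms and the combinatorial multiplicities coming from gluing, and ensure that the class on $\overline{M}_{g,n}$ obtained after pushing forward along $\rho$ matches the gluing pushforward of the vertex contributions — i.e.\ that $\rho$ intertwines the boundary maps of $\overline{H}_g$ with the maps $\xi_A$ of $\overline{M}_{g,n}$.

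An alternative (and in fact the route taken in \cite{FPrel}) is to avoid degenerating the target and instead work directly on $\overline{H}_g(\mu^1,\ldots,\mu^m)$ using the theory of relative stable maps and the associated virtual localization, together with a strong vanishing statement. Here one realizes the admissible cover space, or a closely related space of relative stable maps to $\proj^1$, with a torus action, applies the virtual localization formula of \cite{GP}, and observes that every fixed-point contribution pushes forward to a tautological class because the fixed loci are built from moduli of curves glued along $\psi$-decorated points — exactly the shape of Theorem \ref{maude}'s generators. The hard part in this approach is the vanishing input: one needs that certain Hodge-integral–weighted contributions assemble correctly and that no non-tautological residue survives, which is precisely the content of Proposition 2 of \cite{FPrel}. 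Either way, I expect the real work to be bookkeeping — matching strata, profiles over nodes, and automorphism factors — rather than any single deep new idea, since all the necessary closure properties of $R^*(\overline{M}_{g,n})$ are already in hand.
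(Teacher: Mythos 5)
First, a point of calibration: the paper itself contains no proof of Theorem \ref{hht} --- it is quoted as ``a central result of \cite{FPrel}'' --- so the only meaningful comparison is with the argument given there. Your second route (relative stable maps to $\proj^1$, a torus action, the virtual localization formula of \cite{GP}, and an induction showing every fixed-locus contribution pushes forward tautologically) is indeed the strategy of \cite{FPrel} in outline; moreover the two routes you present as alternatives are really two halves of that single argument, since \cite{FPrel} also degenerates the target into a chain of $\proj^1$'s to distribute the $m$ branch points. But you have misplaced the key technical input. Proposition 2 of \cite{FPrel} (the strong vanishing with tautological boundary terms) is an \emph{output} of the localization analysis, and it is what the present paper uses in Section \ref{rt} for the length bounds; it is not what closes the proof of Theorem \ref{hht}. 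The genuine difficulty there is the ``rubber calculus'': the torus-fixed loci are not simply moduli of curves glued along $\psi$-decorated points --- they involve moduli of relative maps to \emph{non-rigid} targets over $\infty$, which must themselves be shown to push forward tautologically by a separate induction, together with a comparison between the virtual class of the relative space with imposed ramification and the actual fundamental class of the admissible-cover space.

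Second, your first route has a genuine gap at the base case, and it is exactly the kind of gap that the virtual-class machinery exists to fill. For $m=3$ the space $\overline{H}_g(\mu^1,\mu^2,\mu^3)$ is zero-dimensional and its points are \emph{smooth} rigid covers $C\to\proj^1$ branched over three points; $\rho$ sends these to isolated points of the interior $M_{g,n}\subset\overline{M}_{g,n}$. Such a zero-cycle is supported away from the boundary, is not the push-forward of anything under a gluing map, and is in no way ``manifestly tautological by property (ii) and Theorem \ref{maude}.'' There is no elementary reason why the Chow class of a specific interior point should equal the tautological point class --- for large $g$ the moduli space is of general type and its points do not even lie on rational curves, so one cannot slide them into the boundary by an obvious rational equivalence. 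This base case is therefore a nontrivial special case of Theorem \ref{hht}, not an input to it, and the induction on $m$ by degenerating the target cannot bottom out as you describe. This is precisely why \cite{FPrel} works throughout with virtual classes of relative stable maps and localization rather than with a naive target degeneration alone.
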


The admissible covers in Theorem \ref{hht} are of
$\proj^1$. 
The moduli spaces of admissible covers of higher
genus targets can lead to non-tautological classes \cite{GP2}.

\subsection{Tautological rings} \label{coh}
The tautological subrings 
\begin{equation}\label{fbm}
RH^*(\M_{g,n}) \subset H^*(\M_{g,n},\mathbb{Q})
\end{equation}
are defined to be the images of 
$R^*(\M_{g,n})$ under the cycle class map
$$A^*(\M_{g,n},\mathbb{Q}) \rightarrow H^*(\M_{g,n},\mathbb{Q})\ .$$
The tautological rings $RH^*(\overline{M}_{g,n})$  
could alternatively be defined
as the smallest system of subalgebras \eqref{fbm} 
closed under push-forward by all forgetting and
gluing maps (properties (i) and (ii)). Properties
(iii) and (iv) also hold for $RH^*(\M_{g,n})$.

Tautological rings in Chow and cohomology
may be defined for all stages of the standard 
moduli filtration:
\begin{equation}
\label{fff}
\overline{M}_{g,n} \supset M^c_{g,n} \supset  M^{rt}_{g,n} .
\end{equation}
Here,
$M_{g,n}^c$ is the moduli space of 
stable curves of compact type (curves
with tree dual graphs, or equivalently, with
compact Jacobians), and
$M^{rt}_{g,n}$ is the moduli space of
stable curves with rational tails (which lies over the moduli space
of nonsingular curves $M_g \subset \M_g$).
The tautological rings
$$R^*(M^c_{g,n}) \subset A^*(M^c_{g,n},\mathbb{Q}),\ \ \  
RH^*(M^c_{g,n}) \subset H^*(M^c_{g,n},\mathbb{Q}), $$ 
$$R^*(M^{rt}_{g,n}) \subset A^*(M^{rt}_{g,n},\mathbb{Q}),\ \ \   
RH^*(M^{rt}_{g,n}) \subset H^*(M^{rt}_{g,n},\mathbb{Q})\  $$
are all defined as the images of $R^*(\M_{g,n})$
under the natural restriction and cycle class maps.

Of all the tautological rings, the most studied
case by far is $R^*(M_g)$. A complete (conjectural) structure
of $R^*(M_g)$ is proposed in \cite{F} with important
advances made in \cite{I,L,Mo}.
Conjectures
for the compact type cases $R^*(M_{g,n}^c)$ can be found
in \cite{FPlog}. See \cite{FPlam,P1,P2} for positive results for
compact type. Tavakol \cite{Tav1,Tav2}
has recently proved that $R^*(M_{1,n}^c)$ and $R^*(M^{rt}_{2,n})$
are Gorenstein, with socles in degrees $n-1$ and $n$, respectively.

\subsection{Hyperelliptic curves}
As an example, we calculate the class of the hyperelliptic locus
$$H_g \subset M_g$$
following \cite{M}.

Over the moduli point $[C,p]\in M_{g,1}$ there is a canonical map
$$\phi: H^0(C,\omega_C)  \rightarrow H^0(C, \omega/\omega(-2p))$$
defined by evaluating sections of $\omega_C$ at 
the marking $p\in C$.
If we define $\mathbb{J}$ to be the rank 2 bundle over $M_{g,1}$ with
fiber $H^0(C, \omega_C/\omega_C(-2p))$ over $[C,p]$, then we obtain
a morphism
$$\phi: \mathbb{E}  \rightarrow \mathbb{J}$$
over $M_{g,1}$.
By classical curve theory,
the map $\phi$ fails to be surjective precisely when $C$ is
hyperelliptic {and} $p\in C$ is a Weierstrass point.
Let 
$$\Delta \subset M_{g,1}$$ be
the degeneracy locus of $\phi$ of pure codimension
$g-1$.
By the Thom-Porteous formula \cite{Ful},
$$[\Delta] =  \left( \frac{c(\mathbb{E}^*)}{c(\mathbb{J}^*)}\right)_{g-1} \ . $$
By using the jet bundle sequence
$$0 \rightarrow \mathbb{L}_1^2 \rightarrow \mathbb{J} \rightarrow 
\mathbb{L}_1 \rightarrow 0,$$ we conclude
$$[\Delta] =  \left( \frac{1-\lambda_1 +\lambda_2 - \lambda_3 +\ldots +(-1)^g
\lambda_g}{(1-\psi_1) (1-2\psi_1)}\right)_{g-1} \in R^{g-1}(M_{g,1})\ .$$

To find a formula for the hyperelliptic locus,
 we view
$$\pi: M_{g,1} \rightarrow M_g$$
as the universal curve over moduli space.
Since each hyperelliptic curve has $2g+2$ Weierstrass points,
$$[H_g] = \frac{1}{2g+2} \pi_*([\Delta]) \in R^{g-2}(M_g) \ .$$
By Theorem \ref{hht}, the class of the closure of the hyperelliptic
locus is also tautological,
$$[\overline{H}_g] \in R^{g-2}(\overline{M}_g), $$
but no simple formula is known to us.

\section{Point counting and elliptic modular forms} \label{ttt}

\subsection{Elliptic modular forms}
We present here an introduction to the close relationship
between modular forms and the cohomology of moduli spaces.
The connection is most direct between elliptic modular forms and
moduli spaces
of elliptic curves.

Classically, a modular form is a holomorphic function $f$
on the upper half plane
$$\mathbb{H}=\{z\in\mathbb{C}:\text{Im}\,z>0\}$$
with an amazing amount of symmetry.
Precisely, a modular form of {\em weight} $k\in \mathbb{Z}$ satisfies  
the functional equation 
$$f\bigg(\frac{az+b}{cz+d}\bigg)=(cz+d)^k f(z)$$
for all $z\in\mathbb{H}$ and all
$\left(\begin{smallmatrix}a&b\\ c&d\\ \end{smallmatrix}\right)\in 
\SL(2,\mathbb{Z}).$
By
the symmetry for 
$\left(\begin{smallmatrix}-1&0\\ 0&-1\\ \end{smallmatrix}\right)$,
 $$f(z)=(-1)^kf(z),$$
 so $k$ must be even.
Using 
$\left(\begin{smallmatrix}1&1\\ 0&1\\ \end{smallmatrix}\right)$, we see
$$f(z+1)=f(z),$$
so $f$ can be written as a function
of $q=\exp(2\pi iz)$.
We further require $f$ to be holomorphic at $q=0$.
The Fourier expansion is
$$f(q)=\sum_{n=0}^{\infty}a_nq^n.$$
If $a_0$ vanishes, $f$ is called a {\em cusp} form
(of weight $k$ for $\SL(2,\mathbb{Z})$).

Well-known examples of modular forms 
are the Eisenstein series{\footnote{
 Recall the Bernoulli numbers $B_k$ are given by $\frac{x}{e^x-1}=
\sum B_k\frac{x^k}{k!}$ and $\sigma_m(n)$ is the sum of the
$m$-th powers of the positive divisors of $n$.}}
$$E_k(q)=1-\frac{2k}{B_k}\sum_{n=1}^{\infty}\sigma_{k-1}(n)q^n$$
of weight $k\geq 4$
and the discriminant cusp form
\begin{equation}\label{disc}
\Delta(q)=\sum_{n=1}^{\infty}\tau(n)q^n=q\prod_{n=1}^{\infty}(1-q^n)^{24}
\end{equation}
of weight $12$ with the Jacobi product expansion (defining the
Ramanujan $\tau$-function).
In both of the above formulas, the right sides are the
Fourier expansions.

Let $M_k$ be the vector space of holomorphic modular
forms of weight $k$ for $\SL(2,\mathbb{Z})$. 
The ring of modular forms is freely generated \cite{Kob} by $E_4$ and $E_6$,
$$\bigoplus_{k=0}^\infty M_k = \mathbb{C}[E_4,E_6]\ ,$$
and the ideal $\bigoplus_kS_k$ of cusp forms is generated by $\Delta$.
Let $m_k=\dim\,M_k$ and $s_k=m_k-1$.

For every positive integer $n$,
there exists a naturally defined {\em Hecke operator\/} 
$$T_n: M_k\rightarrow M_k\ .$$ 
The $T_n$ commute with each other
and preserve the subspace $S_k$. Moreover, $S_k$ has
a basis of simultaneous eigenforms, which are orthogonal with respect to the
{\em Petersson inner product\/} and can be normalized to have $a_1=1$.
The $T_n$-eigenvalue of such an eigenform equals its $n$-th
 Fourier coefficient \cite{Kob}.

\subsection{Point counting in genus 1}\label{modee}
While
the moduli space $M_{1,1}$ of elliptic curves can be realized over $\mathbb{C}$
as the analytic space $\mathbb{H}/\SL(2,\mathbb{Z})$, we will
view $M_{1,1}$ here as an
algebraic variety over a field $k$ or a scheme (stack) over $\mathbb{Z}$.

Over $\overline{k}$, the $j$-invariant classifies elliptic curves, so 
$$M_{1,1}\ \cong_{\overline{k}}\ \mathbb{A}^1$$
as a coarse moduli space.
But what happens over a finite field $\mathbb{F}_p\,$, where $p$ is a prime
number? Can we calculate $\#M_{1,1}(\mathbb{F}_p)$?
If we count elliptic curves
$E$ over $\mathbb{F}_p$ (up to
$\mathbb{F}_p$-isomorphism) with weight factor
\begin{equation}\label{abcv}
\frac{1}{\#\text{Aut}_{\mathbb{F}_p}(E)}\ , 
\end{equation}
we obtain the expected result.

\begin{prop} \label{gettt}
We have
$
\#M_{1,1}(\mathbb{F}_p)=p$. 
\end{prop}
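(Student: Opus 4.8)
The plan is to read the left‑hand side of the proposition as the groupoid (stacky) cardinality of $\mathcal{M}_{1,1}(\mathbb{F}_p)$, i.e.\ the sum of the weights \eqref{abcv} over all elliptic curves $E/\mathbb{F}_p$ up to $\mathbb{F}_p$‑isomorphism, and to organize this sum according to the $j$‑invariant. Since $j(E)\in\mathbb{F}_p$ for $E/\mathbb{F}_p$, and since (classically, e.g.\ as in Silverman) every element of $\mathbb{F}_p$ is the $j$‑invariant of \emph{some} elliptic curve defined over $\mathbb{F}_p$, it suffices to show that for each fixed $j_0\in\mathbb{F}_p$ the curves over $\mathbb{F}_p$ with $j$‑invariant $j_0$ contribute exactly $1$ to the weighted count. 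Summing over the $p$ values of $j_0$ then yields $p$.

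To prove the per‑$j_0$ claim, fix one elliptic curve $E_0/\mathbb{F}_p$ with $j(E_0)=j_0$. The curves over $\mathbb{F}_p$ with the same $j$‑invariant are exactly the twists of $E_0$ — the $\mathbb{F}_p$‑forms becoming isomorphic to $E_0$ over $\overline{\mathbb{F}}_p$ — and these are classified by $H^1\big(\mathrm{Gal}(\overline{\mathbb{F}}_p/\mathbb{F}_p),\,A\big)$ with $A=\Aut_{\overline{\mathbb{F}}_p}(E_0)$, a finite group with its natural Galois action; moreover the $\mathbb{F}_p$‑automorphism group of the twist attached to a cocycle is the fixed subgroup of $A$ under the correspondingly twisted action. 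I would then use that $\mathrm{Gal}(\overline{\mathbb{F}}_p/\mathbb{F}_p)\cong\hat{\mathbb{Z}}$ is topologically generated by Frobenius $F$, so that $H^1$ is identified with the set of $F$‑twisted conjugacy classes in $A$, namely the orbits of the action $c\cdot a = c^{-1}aF(c)$ of $A$ on itself, and that the automorphism group of the twist attached to $a\in A$ is the stabilizer $Z_F(a)=\{c\in A: aF(c)=ca\}$. Applying the orbit–stabilizer theorem to this action of the finite group $A$ on the finite set $A$ gives $\sum_{[a]}\#A/\#Z_F(a)=\#A$, hence $\sum_{[a]} 1/\#Z_F(a)=1$, which is precisely the contribution of the twists of $E_0$, completing the count.

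The step I expect to require the most care is the twisting dictionary itself: verifying that forms with a common $j$‑invariant are governed by $H^1$ with coefficients in $\Aut_{\overline{\mathbb{F}}_p}(E_0)$ as a Galois module, that the automorphism group of a twist is the corresponding stabilizer, and that all of this is unaffected by the characteristic $2$ and $3$ phenomena where $\#A$ can be $12$ or $24$. For $p>3$ there is a completely elementary alternative that sidesteps cohomology: every $E/\mathbb{F}_p$ has a Weierstrass model $y^2=x^3+ax+b$, two models $(a,b),(a',b')$ give $\mathbb{F}_p$‑isomorphic curves iff $(a',b')=(u^4a,u^6b)$ for some $u\in\mathbb{F}_p^{*}$, and the stabilizer of $(a,b)$ under this $\mathbb{F}_p^{*}$‑action is $\Aut_{\mathbb{F}_p}(E)$; Burnside's formula then gives $\#M_{1,1}(\mathbb{F}_p)=\#\{(a,b):4a^3+27b^2\neq 0\}/(p-1)$, and one checks that the cuspidal cubic $4a^3+27b^2=0$ has exactly $p$ points over $\mathbb{F}_p$, being parametrized bijectively by $t\mapsto(-3t^2,2t^3)$, so the numerator is $p^2-p$ and the quotient is $p$. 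The cost of this shortcut is the separate, more tedious analysis of $p=2,3$ using the general five‑parameter Weierstrass equation, which is why I would present the uniform cohomological argument as the main proof.
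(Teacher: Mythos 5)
Your argument is correct, and your primary route is genuinely different from the paper's. The paper works directly with models: for odd $p$ it puts $E$ in the form $y^2=x^3+ax^2+bx+c$ with separable cubic, counts $p^3-p^2$ such models, and divides by the order $p^2-p$ of the group of affine transformations of $\mathbb{A}^1=\mathbb{P}^1\setminus\{\infty\}$ (leaving both the verification that this division implements the weighting \eqref{abcv} and the case $p=2$ to the reader). Your ``elementary alternative'' is essentially this same argument specialized to short Weierstrass form, hence valid only for $p>3$, whereas the paper's choice of model also covers $p=3$; note also that the identity you invoke, $\sum_{\text{orbits}}1/\#\mathrm{Stab}=\#X/\#G$, is the orbit–stabilizer mass formula rather than Burnside's lemma, though the computation is right. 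Your main argument — fibering over the $j$-line and showing each fiber has weighted mass $1$ via the identification of $H^1(\hat{\mathbb{Z}},\Aut_{\overline{\mathbb{F}}_p}(E_0))$ with Frobenius-twisted conjugacy classes — is more conceptual and buys uniformity in $p$: it handles $p=2,3$ and the large automorphism groups at $j=0,1728$ without case analysis, and it makes the $1/\#\Aut$ weighting structural (it is exactly what forces each fiber to have mass $1$). The price is importing the Galois descent dictionary (classification of forms by nonabelian $H^1$, effectivity of descent, and the identification of $\Aut_{\mathbb{F}_p}$ of a twist with the twisted stabilizer), together with the standard fact that every $j_0\in\mathbb{F}_p$ is realized by a curve over $\mathbb{F}_p$. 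Both routes are sound.
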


\begin{proof}
The counting is very simple for $p\neq 2$.
Given an elliptic curve with a point $z\in E$ defined over
$\mathbb{F}_p$, we obtain a degree 2 morphism
$$\phi:E \rightarrow \mathbb{P}^1$$
from the linear series associated to $\mathcal{O}_E(2z)$.
If we view the image of $z$ as $\infty\in \mathbb{P}^1$, the branched
covering $\phi$ expresses $E$ in Weierstrass form,
$$y^2 = x^3+ ax^2+bx+c \ \ \ \text{with} \ \  a,b,c\in \mathbb{F}_p,$$
where the cubic on the right has distinct roots in
$\overline{\mathbb{F}}_p$.
The number of monic cubics in $x$ is $p^3$, the number with
a single pair of double roots is $p^2-p$, and the number with
a triple root is $p$.
Hence, the number of Weierstrass forms is $p^3-p^2$.

Since only  $\infty\in \mathbb{P}^1$ is distinguished,
we must further divide by the group of affine transformations
of 
$$\mathbb{A}^1=\mathbb{P}^1\setminus \{ \infty \}$$ over $\mathbb{F}_p$. Since the order
of the affine transformation group is $p^2-p$, we conclude
$$\#M_{1,1}(\mathbb{F}_p)=\frac{p^3-p^2}{p^2-p} = p\ .$$
We leave to the reader to check the above counting weights
curves by the factor \eqref{abcv}. We also leave the $p=2$
case to the reader.
\end{proof}

Motivated by Proposition \ref{gettt}, we will study the
number of points of $M_{1,n}(\mathbb{F}_p)$ 
weighted by the order of the $\mathbb{F}_p$-automorphism group
of the marked curve.
For example, 
$$\#M_{1,6}(\mathbb{F}_2)=0,$$ 
since an elliptic curve $E$
over $\mathbb{F}_2$ contains at most $5$ distinct points by
the Weil bound \cite{HAG},
$$| \#E(\mathbb{F}_2)- (2+1)| \leq 2\sqrt 2\ .$$

To investigate the behavior of 
$M_{1,n}(\mathbb{F}_p)$, we can effectively enumerate
$n$-pointed elliptic curves over $\mathbb{F}_p$ by
computer.
Interpreting the data, we find
for $n\le10$, 
\begin{equation}\label{hmmr}
\#M_{1,n}(\mathbb{F}_p)=f_n(p),
\end{equation}
where $f_n$ is a monic polynomial of degree $n$ with integral coefficients.
The moduli spaces $M_{1,n}$ are rational varieties \cite{Pasha} for $n\leq 10$.
The polynomiality \eqref{hmmr}
has been proved directly by Bergstr\"om \cite{Bg3}
using geometric constructions (as in the proof of Proposition
\ref{gettt}). 
However for $n=11$, the computer counts
exclude the possibility of a degree 11  
polynomial for $\#M_{1,11}(\mathbb{F}_p)$. The data
suggest
\begin{equation}\label{tau4}
\#M_{1,11}(\mathbb{F}_p)=f_{11}(p)-\tau(p).
\end{equation}
For higher $n$, the $p$-th Fourier coefficients of the other
Hecke cusp eigenforms eventually appear in the counting
as well (multiplied with
polynomials in~$p$).

\subsection{Cohomology of local systems}
\label{cls}
To understand what is going on, we recall
the number of points of $M_{1,n}$ over $\mathbb{F}_p$ equals the
trace of Frobenius on the Euler characteristic of
the compactly supported $\ell$-adic cohomology of $M_{1,n}$.
The Euler characteristic has good additivity properties
and the multiplicities in applying the Lefschetz fixed point
formula to ${\rm Frob}_p$ are all equal to~$1$. 
The cohomology  of projective space yields traces which are
polynomial in $p$,
$$\#\mathbb{P}^n(\mathbb{F}_p)=p^n+p^{n-1}+\dots+p+1,$$
since the trace of Frobenius on the Galois
representation $\mathbb{Q}_{\ell}(-i)$ 
equals $p^i$. 

As is well-known, the fundamental class
of an algebraic subvariety is of Tate type (fixed
by the Galois group). The
Tate conjecture
essentially asserts the converse. 
If equation \eqref{tau4} holds for all $p$, then
$\#M_{1,11}(\mathbb{F}_p)$ will fail to fit a
degree 11 polynomial (even for all but finitely many $p$).
We can then conclude $M_{1,11}$ possesses cohomology not represented
by
algebraic
classes defined over $\mathbb{Q}$. Such cohomology,
in particular, can not be tautological.

In fact, the moduli space $M_{1,n}(\mathbb{C})$ has cohomology related to cusp forms. 
The simplest connection is the construction of a 
holomorphic differential form on $M_{1,11}$ from
the discriminant  \eqref{disc}.
We view $M_{1,11}$ as an open subset{\footnote{In fact,
$M_{1,11}$ is obtained by removing diagonal loci from the right side.}} 
$$M_{1,11} \subset \frac{\mathbb{H} \times \com^{10}}{\SL(2,\mathbb{Z}) \times
(\mathbb{Z}^2)^{10}}\ .$$
As usual, $\SL(2,\mathbb{Z})$ acts on $\mathbb{H}$ via
linear fractional transformations. At the point
$(z, \zeta_1, \ldots, \zeta_{10}) \in \mathbb{H} \times \com^{10}$,
the action is
\begin{multline*}
(\left(\begin{smallmatrix}a&b\\ c&d\\ \end{smallmatrix}\right)
, (x_1,y_1), \ldots, (x_{10},y_{10})) 
\cdot (z, \zeta_1, \ldots, \zeta_{10})
= \\ \Bigg( 
\left(\frac{az+b}{cz+d}\right), \frac{\zeta_1}{cz+d}+x_1+y_1 
\left(\frac{az+b}{cz+d}\right), 
\\ \ldots,
\frac{\zeta_{10}}{cz+d}+x_{10}+y_{10} 
\left(\frac{az+b}{cz+d}\right)
\Bigg)\ .
\end{multline*}
A direct verification using the weight 12
functional equation for the discriminant shows
the holomorphic 11-form
\begin{equation}\label{nnhh}
\Delta(e^{2\pi iz})\, dz \wedge d\zeta_1 \wedge \ldots \wedge d\zeta_{10}
\end{equation}
on $\mathbb{H} \times \com^{10}$ is invariant under the
action and descends to $M_{1,11}$. Since
the discriminant is a cusp form, the form \eqref{nnhh}
extends to a nontrivial element of $H^{11,0}(\overline{M}_{1,11},\com)$.
In fact, 
$$H^{11,0}(\overline{M}_{1,11},\com) \cong \com$$
by calculations of \cite{GetzSemi} (the $\Sigma_{11}$-representation
on $H^{11,0}(\overline{M}_{1,11},\com)$ is alternating).
As a consequence, $\overline{M}_{1,11}$
is irrational.

Returning to the open locus $M_{1,n}$, we can consider the
fibration
$$\pi:M_{1,n} \rightarrow M_{1,1}$$
with fibers given by open subsets of $E^{n-1}$ (up to automorphisms).
The only interesting cohomology of $E$ is  ${H}^1(E,\mathbb{Q})$.
The motivic Euler characteristic of the cohomology of the 
fibers of $\pi$ can be expressed in terms of the
symmetric powers of ${H}^1(E,\mathbb{Q})$.
Letting $E$ vary, the $a$-th symmetric power gives rise to a
{\em local system\/} $\mathbb{V}_a$ on $M_{1,1}$.
The cohomology of these local systems was studied in detail by Shimura.
A basic result \cite{Shi} is the {\em Shimura isomorphism}, 
$${H}^1_!(M_{1,1},\mathbb{V}_a)\otimes\mathbb{C} = S_{a+2} \oplus
\overline{S}_{a+2}\,.$$
Here, $H^i_!$ is the {\em inner cohomology\/}, the image
of the cohomology $H^i_c$ with compact support in the usual
cohomology $H^i$. The Shimura isomorphism gives a connection
between $M_{1,11}$ and the space of cusp forms $S_{12}$. 
The inner cohomology group
${H}^1_!(M_{1,1},\mathbb{V}_a)$ has a pure Hodge structure
of weight $a+1$ and $S_{a+2}$ has Hodge type $(a+1,0)$. 
We have found
non-algebraic cohomology, but we still need to understand the
contribution to the trace of Frobenius.

As shown by Deligne \cite{Del}, Hecke operators can be defined 
on the inner cohomology group ${H}^1_!(M_{1,1},\mathbb{V}_a)$
compatibly with the Shimura isomorphism and the earlier 
operators on the spaces of cusp forms.
The Eichler-Shimura congruence relation \cite{Del,Eich,Shi} 
establishes a connection
between the Hecke operator $T_p$ on
${H}^1_!(M_{1,1},\mathbb{V}_a)\otimes\mathbb{Q}_{\ell}$ and the
Frobenius at $p$. More precisely, $T_p$ equals the sum of
Frobenius and Verschiebung at $p$ (the adjoint of Frobenius with 
respect to the natural scalar product).
As explained in \cite{Del},
the results finally show the trace of Frobenius at $p$ 
on ${H}^1_!(M_{1,1},\mathbb{V}_a)\otimes\mathbb{Q}_{\ell}$
to be equal to the
trace of the Hecke operator $T_p$ on $S_{a+2}$, which is the sum
of the $p$-th Fourier coefficients of the normalized Hecke cusp eigenforms.

We are now in a position to derive the mysterious
counting formula 
$$\#M_{1,11}(\mathbb{F}_p)=f_{11}(p)-\tau(p).$$
The kernel of the map 
$$H^i_c(M_{1,1}, \mathbb{V}_a)\to H^i(M_{1,1}, \mathbb{V}_a)$$ 
is the {\em Eisenstein
cohomology}. Like the inner cohomology, 
the Eisenstein cohomology for $a>0$ is concentrated
in degree~$1$. It consists of Hodge-Tate classes of weight~$0$ which
over $\mathbb{C}$ are represented by suitably normalized Eisenstein
series. Writing $S[a+2]$ for ${H}^1_!(M_{1,1},\mathbb{V}_a)$,
we obtain 
\begin{equation}\label{ghh3}
e_c(M_{1,1},\mathbb{V}_a)=-S[a+2]-1
\end{equation}
for $a>0$ even, while the Euler characteristic vanishes for $a$ odd.
A motivic construction of
$S[a+2]$ can be found in \cite{CF,Scholl}, see also \cite{Petersen2}.
By the discussion in the previous paragraph, the trace of
Frobenius picks up $-\tau(p)$ from \eqref{ghh3}.
As another consequence of \eqref{ghh3}, we find the
following formula for the integer valued Euler
characteristic,
\begin{equation*}
E_c(M_{1,1},\mathbb{V}_a)=-2s_{a+2}-1.
\end{equation*}
In fact,
 formula \eqref{ghh3} has been used by Getzler \cite{GetzSemi}
to calculate the full Hodge decomposition of
the cohomology of $\overline{M}_{1,n}$.

\section{Point counting and Siegel modular forms} \label{tttt3} 
\subsection{Summary}
Siegel modular forms are
related to  moduli spaces of abelian varieties.
As a consequence,
a strong relationship between such modular forms and the
cohomology of moduli spaces of curves of genus 2 is obtained,
leading to a complete (conjectural) description
of the cohomology of $\overline{M}_{2,n}$. 
Perhaps surprisingly, the resulting  description in genus 3 is
not complete.
Elliptic
and Siegel modular forms do not suffice to describe the cohomology
of~$M_{3,n}$ for~$n$ large enough --- the Teich\-m\"uller
modular forms introduced by Ichikawa \cite{Ichi}
are needed.
In genus 2 and 3, data from point counting over finite fields
is used to formulate the conjectures and to explore the cohomology.

\subsection{Siegel modular forms}
In higher genus $g>1$, the analogue of elliptic modular
forms are {\em Siegel modular forms\/}. The latter are
defined on
$\mathbb{H}_g$, the space of $g\times g$ complex symmetric
matrices with positive-definite imaginary part and  satisfy the
functional equation 
$$ f((az+b)(cz+d)^{-1})=\rho(cz+d) f(z)$$
for all $z\in\mathbb{H}_g$ and all
$\left(\begin{smallmatrix}a&b\\ c&d\\ \end{smallmatrix}\right)\in 
\mathsf{Sp}(2g,\mathbb{Z})$. Here, $\rho$ is a representation
of $\mathsf{GL}(g,\mathbb{C})$.
Classically, $$\rho(M)={\text{Det}}^k(M),$$
 but other irreducible
representations of $\mathsf{GL}(g,\mathbb{C})$ are just as relevant.
The form $f$ is vector valued. Since $z$ has positive
definite imaginary part,  $cz+d$ is indeed
invertible.

Hecke operators can be defined on Siegel modular forms \cite{Andr,Arak}. The
Fourier coefficients of an eigenform here contain much more
information than just the Hecke eigenvalues:
\begin{enumerate}
\item [(i)]
the coefficients are indexed by positive semi-definite
$g\times g$ symmetric matrices with integers on the diagonal
and half-integers elsewhere,
\item[(ii)]
the coefficients themselves
are vectors.
\end{enumerate}
{\em Cusp forms\/} are Siegel modular forms in the kernel
of the Siegel $\Phi$-operator:
$$(\Phi f)(z')=\lim_{t\to\infty} f\left(\begin{smallmatrix}z'&0\\
0&it\\ \end{smallmatrix}\right),$$
for $z'\in\mathbb{H}_{g-1}$.
As before, the Hecke operators preserve the
space of cusp forms.

\subsection{Point counting in genus 2}

Since 
genus $2$ curves are double covers of $\mathbb{P}^1$, 
we can effectively count them over $\mathbb{F}_p$. As before, each isomorphism
class over $\mathbb{F}_p$ is counted with the reciprocal of the
number of $\mathbb{F}_p$-automorphisms. For small $n$, the
enumeration of $M_{2,n}(\mathbb{F}_p)$ can be done by hand as
in Proposition \ref{gettt}. For large $n$, computer counting
is needed.

Bergstr\"om \cite{Bhyp} has proven the polynomiality in $p$ of
$\#M_{2,n}(\mathbb{F}_p)$ for $n\leq 7$ by geometric
stratification of the moduli space. 
Experimentally,  we find $\#M_{2,n}(\mathbb{F}_p)$ is a
polynomial 
of degree $n+3$ in $p$ for $n\leq 9$.
In the range
 $10\le n\le 13$, the function
$\tau(p)$ 
multiplied by a polynomial appears in $\#M_{2,n}(\mathbb{F}_p)$.
More precisely, computer counting predicts
$$\#M_{2,10}(\mathbb{F}_p)= f_{13}(p) + (p-9) 
\tau(p)  \ .$$
If the above formula holds, then
 the possibility of a degree $13$
polynomial for $\#M_{2,10}(\mathbb{F}_p)$ is excluded, so 
we conclude 
$M_{2,10}$ has cohomology which is not algebraic
over $\mathbb{Q}$ (and therefore non-tautological).

Higher weight modular forms appear in the counting for 
 $n=14$. Computer counts predict 
$$\#M_{2,14}(\mathbb{F}_p)=f_{17}(p)+f_5(p)\tau(p)
-13c_{16}(p)-429c_{18}(p),$$
where $c_{16}(p)$ and $c_{18}(p)$ are the Fourier coefficients
of the normalized Hecke cusp eigenforms of weights 16 and 18.
The appearance of $c_{16}(p)$ appears is perhaps not so surprising
since $$\tau= c_{12}$$ occurs in the function
$\#M_{2,10}(\mathbb{F}_p)$. By counting
$\Sigma_{14}$-equivariantly, we find  the coefficient $13$
is the dimension of the irreducible representation corresponding
to the partition $[2\,1^{12}]$. 

More surprising is the appearance of $c_{18}(p)$. Reasoning as before in 
Section \ref{modee}, 
we would expect the motive $S[18]$ to occur in the
cohomology of $M_{2,14}$. Since the Hodge types
are $(17,0)$ and $(0,17)$,
the motive 
$S[18]$ cannot possibly come from the boundary
of the $17$ dimensional moduli space,
but must come from the interior.
We would then expect a Siegel cusp form to be responsible for the
occurrence, with Hecke eigenvalues closely related
to those of the elliptic cusp form of weight $18$. There indeed 
exists such a Siegel modular form:
the classical Siegel cusp form $\chi_{10}$ of weight $10$, the product
of the squares of the $10$ even theta characteristics, is a 
Saito-Kurokawa lift of $E_6\Delta$. In particular, the Hecke eigenvalues
are related via
$$\lambda_{\chi_{10}}(p)=\lambda_{E_6\Delta}(p)+p^8+p^9=c_{18}(p)+p^8+p^9.$$
The coefficient $429$ is the dimension
of the irreducible representation corresponding
to the partition $[2^7]$.

The function $\#M_{2,15}(\mathbb{F}_p)$ is expressible in
terms of $\tau$, $c_{16}$, and $c_{18}$ as expected. 
For $n=16$, the Hecke eigenvalues of a vector valued Siegel
cusp form appear for the first time: the unique form
of type $$\rho={\text{Sym}}^6\otimes{\text{Det}} ^8$$  constructed by Ibukiyama
(see \cite{FvdG}) arises. 
For $n\leq 25$, the
computer counts of $\#M_{2,n}(\mathbb{F}_p)$  have 
to a large extent
been successfully fit by Hecke eigenvalues of Siegel cusp forms \cite{Grundh}.

\subsection{Local systems in genus 2}
To explain the experimental results  for $\#M_{2,n}(\mathbb{F}_p)$
discussed above, 
we study the cohomology of local systems on the
moduli space $M_2$.
Via the Jacobian, there is an inclusion
$$M_2 \subset A_2$$
as an open subset in the 
moduli space of principally polarized abelian surfaces.
In fact, the geometry of local systems on $A_2$ is a more
natural object of study.

The irreducible representations $V_{a,b}$ of $\mathsf{Sp}(4,\mathbb{Z})$
are indexed by integers
$$a\ge b\ge0.$$ 
Precisely, $V_{a,b}$ is the irreducible representation of
highest weight occurring in 
$\text{Sym}^{a-b}(R)\otimes\text{Sym}^b(\wedge^2 R)$, where
$R$ is the dual of the standard representation.
Since the moduli space of abelian surfaces arises
as a quotient,
$$A_2 = \mathbb{H}_2/ \mathsf{Sp}(4,\mathbb{Z}),$$
we obtain
local systems $\mathbb{V}_{a,b}$ for $\mathsf{Sp}(4,\mathbb{Z})$ 
on $A_2$ 
associated to the representations $V_{a,b}$.
The local system $\mathbb{V}_{a,b}$ is {\em regular} if
$a>b>0$.

Faltings and Chai \cite{faltchai}
relate the cohomology of $\mathbb{V}_{a,b}$ on $A_2$
to the space $S_{a-b,b+3}$ of
Siegel cusp forms of type $\text{Sym}^{a-b}\otimes\text{Det}^{b+3}$.
To start,  $H^i_c(A_2,\mathbb{V}_{a,b})$ has
a natural mixed Hodge structure with weights at most ${a+b+i}$, and
$H^i(A_2,\mathbb{V}_{a,b})$ has a mixed Hodge structure with 
weights at least
${a+b+i}$. The inner cohomology $H^i_!(A_2,\mathbb{V}_{a,b})$
therefore has a pure Hodge structure of weight $a+b+i$. Faltings \cite{falt}
had
earlier shown that $H^i_!(A_2,\mathbb{V}_{a,b})$ is concentrated
in degree~$3$ when the local system is regular.
For the Hodge structures above, the degrees of the
Hodge filtration are contained in $$\{0,b+1,a+2,a+b+3\}.$$
 Finally,
there are natural isomorphisms
$$F^{a+b+3}H^3(A_2,\mathbb{V}_{a,b})\cong M_{a-b,b+3},$$
$$F^{a+b+3}H^3_c(A_2,\mathbb{V}_{a,b})
=F^{a+b+3}H^3_!(A_2,\mathbb{V}_{a,b})
\cong S_{a-b,b+3}$$
with the spaces of Siegel modular (respectively cusp) forms of the
type mentioned 
above.
The quotients in the Hodge filtration
are isomorphic to certain explicit coherent cohomology groups, see
\cite{TRR}.

One is inclined to expect the occurrence of motives $S[a-b,b+3]$
corresponding to Siegel cusp forms
in $H^3_!(A_2,\mathbb{V}_{a,b})$, at least in the case of a regular
weight. Each Hecke cusp eigenform should contribute a
4-dimensional piece with four 1-dimensional pieces in the
Hodge decomposition of types 
$$(a+b+3,0), \ (a+2,b+1), \ (b+1,a+2), \ (0,a+b+3).$$
By the theory of automorphic representations,
the inner cohomology may contain other terms as well,
the so-called endoscopic contributions. In the present case, the
endoscopic contributions come apparently only{\footnote{However,
our use of the term endoscopic is potentially non-standard.}} 
with Hodge types $(a+2,b+1)$ and $(b+1,a+2)$.

Based on the results from equivariant point counts, 
Faber and Van der Geer \cite{FvdG} 
obtain the following explicit conjectural
formula for the  Euler characteristic of the compactly
supported cohomology:

\vspace{10pt}
\noindent {\bf{Conjecture.}}
{\em For $a>b>0$ and $a+b$ even,
\begin{multline*}
e_c(A_2,\mathbb{V}_{a,b})=-S[a-b,b+3]-s_{a+b+4}S[a-b+2]L^{b+1}
\\
+s_{a-b+2}-s_{a+b+4}L^{b+1}-S[a+3]+S[b+2]+\tfrac12(1+(-1)^a).
\end{multline*}}
\vspace{0pt}

The terms in the second line constitute the
Euler characteristic of the Eisenstein cohomology. The second term
in the first line is the endoscopic contribution.
Just as in the genus $1$ case,
the hyperelliptic involution causes the vanishing of all cohomology
when $a+b$ is odd.
However, the above conjecture is not quite
a precise statement. While 
$L$ is the Lefschetz motive and
$S[k]$ has been discussed in Section \ref{cls},
the motives $S[a-b,b+3]$ have not been
constructed yet. Nevertheless, several precise
predictions of the conjecture  will be discussed.

First, we can specialize the conjecture to yield a
prediction for the 
integer valued Euler characteristic
$E_c(A_2,\mathbb{V}_{a,b})$ . The formula from the conjecture is
\begin{multline*}
E_c(A_2,\mathbb{V}_{a,b})=-4s_{a-b,b+3}-2s_{a+b+4}s_{a-b+2}
\\
+s_{a-b+2}-s_{a+b+4}-2s_{a+3}+2s_{b+2}+\tfrac12(1+(-1)^a).
\end{multline*}
for $a>b>0$ and $a+b$ even. The lower case $s$ denotes
the dimension of the corresponding space of cusp  forms.
The dimension formula
was proved by Grundh \cite{Grundh} for $b>1$ using
earlier work of Getzler and a formula of Tsushima for 
$s_{j,k}$,
proved for $k>4$
 (and presumably true for~$k=4$).
In fact,
combining work of Weissauer \cite{Weiss} on the inner cohomology and of 
van der Geer \cite{vdG}
on the Eisenstein cohomology, one may deduce 
the implication of the conjecture obtained by taking the realizations
as $\ell$-adic Galois representations of all terms.

A second specialization of the conjecture yields a prediction
for $\#M_{2,n}(\mathbb{F}_p)$ via the eigenvalues of Hecke
cusp forms. The prediction has been checked for $n\leq 17$ 
and $p\le23$. The prediction for the trace of $\text{Frob}_p$
on $e_c(M_2,{\mathbb V}_{a,b})$ has been checked in many more cases.
Consider the local systems ${\mathbb V}_{a,b}$ with $a+b\le24$ and $b\ge5$.
There are $13$ such local systems with $\dim S_{a-b,b+3}=1$; the prediction
has been checked for $11$ of them, for $p\le23$. There are also
$6$ such local systems with $\dim S_{a-b,b+3}=2$, the maximal dimension;
the prediction has been checked for all $6$ of them, for $p\le17$.

We believe the conjecture to be correct also when $a=b$ or $b=0$
after a suitable re-interpretation of several terms.
To start, we set $s_2=-1$ and define
$$S[2]=-L-1,$$ which of course is {\em not\/} equal to
${H}^1_!(M_{1,1},\mathbb{V}_0)$ but does yield the correct answer
for $$e_c(M_{1,1},\mathbb{V}_0)=L.$$
Let $S[k]=0$ for odd~$k$.
Similarly, define
$$S[0,3]=-L^3-L^2-L-1\ . $$
By analogy, let $s_{0,3}=-1$, which equals the value produced
by Tsushi\-ma's formula (the only negative value
of~$s_{j,k}$ for~$k\ge3$). We then obtain  the correct answer
for~$$e_c(A_{2},\mathbb{V}_{0,0})=L^3+L^2.$$
Finally, $S[0,10]$ is defined as $L^8+S[18]+L^9$, and more generally,
$S[0,m+1]$ includes (for $m$ odd) a contribution
$SK[0,m+1]$ defined as $S[2m]+s_{2m}(L^{m-1}+L^m)$ ---
corresponding to the Saito-Kurokawa lifts, but not a part
of ${H}^3_!(A_{2},\mathbb{V}_{m-2,m-2})$.

\subsection{Holomorphic differential forms}
We can use the Siegel modular form 
$\chi_{10}$ 
to
construct holomorphic differential forms on $M_{2,14}$.
To start, we consider the seventh fiber product $U^7$ of the universal
abelian surface 
$$U \rightarrow A_2\ .$$
We can construct $U^7$ as a quotient,
$$U^7 = \frac{\mathbb{H}_2 \times (\com^{2})^7}{\mathsf{Sp}(4,\mathbb{Z}) 
\times
(\mathbb{Z}^4)^{7}}\ .$$
We will take $z=
\left(\begin{smallmatrix}z_{11}&z_{12}\\ z_{12}&z_{22}\\ \end{smallmatrix}\right)
\in \mathbb{H}_2$ and $\zeta_i=(\zeta_{i1},\zeta_{i2})\in \com^2$ 
to be coordinates.
At the point
$$(z, \zeta_1, \ldots, \zeta_{7}) \in \mathbb{H}_2 \times (\com^{2})^7,$$
the action is
\begin{multline*}
(\left(\begin{smallmatrix}a&b\\ c&d\\ \end{smallmatrix}\right)
, (x_1,y_1), \ldots, (x_{7},y_{7})) 
\cdot (z, \zeta_1, \ldots, \zeta_{7})
= \\ \Big( 
 ({az+b})({cz+d})^{-1}, \zeta_1 (cz+d)^{-1}+x_1+y_1 
({az+b}) ({cz+d})^{-1}, 
\\ \ldots,
\zeta_{7}(cz+d)^{-1}+x_{7}+y_{7} 
({az+b})({cz+d})^{-1}
\Big) \ , 
\end{multline*}
where $x_i,y_i \in \mathbb{Z}^2$.
A direct verification using the weight 10
functional equation for  $\chi_{10}$ shows
the holomorphic 17-form
\begin{equation}\label{nnhh7}
\chi_{10}(z) \, dz_{11} \wedge dz_{12} \wedge dz_{22}
 \wedge d\zeta_{11} \wedge d\zeta_{12} \wedge
\ldots \wedge d\zeta_{71} \wedge d\zeta_{72} 
\end{equation}
on $\mathbb{H}_2 \times (\com^{2})^7$ is invariant under the
action and descends to $U^7$. 

To obtain a $17$-form on $M_{2,14}$, we consider the Abel-Jacobi
map
$M_{2,14} \rightarrow  U^7$
defined by
$$(C,p_1,\ldots,p_{14}) \mapsto  ( \text{Jac}_0(C), \omega^*_C(p_1+p_2),
\ldots, \omega^*_C(p_{13}+p_{14}))\ .$$ 
The pull-back of  \eqref{nnhh7} yields a holomorphic
$17$-form on $M_{2,14}$.
Since
 $\chi_{10}$ is a cusp form, the pull-back
extends to a nontrivial element of $H^{17,0}(\overline{M}_{2,14},\com)$.
Assuming the conjecture for local systems in genus 2,
$$H^{17,0}(\overline{M}_{2,14},\com) \cong \com^{429}  $$
with  $\Sigma_{14}$-representation
irreducible of type $[2^7]$.
We leave as an exercise for the reader to show
our construction of $17$-forms naturally 
yields the representation $[2^7]$.

The existence of $17$-forms implies the irrationality
of  $\overline{M}_{2,14}$.
Both $M_{2,n\leq 12}$ and the quotient of $M_{2,13}$
obtained by unordering the last two points are rational
by classical constructions \cite{CasFont}. Is 
$M_{2,13}$ rational?

\subsection{The compactification $\overline{M}_{2,n}$} \label{3h4}
We now assume the conjectural formula for $e_c(A_2,\mathbb{V}_{a,b})$
holds for 
$a\ge b\ge0$. As a consequence, we can compute the
Hodge numbers of the compactifications 
$\overline{M}_{2,n}$ for all $n$.

We view, as before, $M_2\subset A_2$
via the Torelli morphism. The complement is isomorphic
to $\text{Sym}^2A_1$. The local systems
$\mathbb{V}_{a,b}$
can be pulled back to $M_2$ and restricted to $\text{Sym}^2A_1$.
The cohomology of the restricted local systems can be
understood by combining the branching formula
from $\mathsf{Sp}_4$ to $\mathsf{SL}_2\times \mathsf{SL}_2$ 
with an analysis of the effect
of quotienting by the involution of $A_1\times A_1$ which
switches the factors, \cite{BvdG,Grundh,Petersen}.
Hence, the Euler characteristic
$e_c(M_2,\mathbb{V}_{a,b})$ is determined. The motives
$\wedge^2S[k]$ and~$\text{Sym}^2S[k]$ (which sum to~$S[k]^2$)
will occur.
Note $$\wedge^2S[k]\cong L^{k-1}$$ when $s_k=1$.

As observed by Getzler \cite{TRR},
the Euler characteristics $e_c(M_2,\mathbb{V}_{a,b})$ for $a+b\le N$
determine and are determined by the $\Sigma_n$-equivariant
Euler characteristics $e_c^{\Sigma_n}(M_{2,n})$ for $n\le N$.
Hence, the latter are (conjecturally) determined  for all $n$.
Next, by the work of Getzler and Kapranov \cite{GK},
the Euler characteristics $e_c^{\Sigma_n}(\M_{2,n})$ are determined
as well, since we know the Euler characteristics in genus at most 1.
Conversely, the answers for~$\M_{2,n}$ determine those
for~$M_{2,n}$.
After 
implementing the Getzler-Kapranov formalism 
(optimized for genus $2$), a calculation
of $e_c^{\Sigma_n}(\M_{2,n})$ for all $n\le22$ has been
obtained.\footnote{Thanks to Stembridge's symmetric functions 
package SF.}
Needless to say, all answers satisfy Poincar\'e duality, which
is a very non-trivial check.

The cohomology of $\overline{M}_{2,21}$ is of particular
interest to us.
The
motives~$\wedge^2S[12]$ and~$\text{Sym}^2S[12]$ 
appear for the first time for $n=21$ (with a Tate twist).
The coefficient 
of~$L\wedge^2S[12]$ in $e_c^{\Sigma_{21}}(\M_{2,21})$
equals
$$[3\,1^{18}]+[3\,2^2\,1^{14}]+[3\,2^4\,1^{10}]+[3\,2^6\,1^{6}]
+[3\,2^8\,1^{2}]+[2\,1^{19}]+[2^2\,1^{17}]$$$${}+[2^3\,1^{15}]+[2^4\,1^{13}]
+[2^5\,1^{11}]+[2^6\,1^{9}]+[2^7\,1^{7}]+[2^8\,1^{5}]+[2^9\,1^{3}]
+[2^{10}\,1^{1}]
$$
as a
$\Sigma_{21}$-representation.
As mentioned, 
\begin{equation}\label{izom}
\wedge^2S[12]\cong L^{11} . 
\end{equation}
We find thus $1939938$ independent
Hodge classes $L^{12}$,  which should be algebraic
by the Hodge conjecture. However,
$1058148$ of these classes  come with an irreducible
representation of length at least $13$. By Theorem \ref{snsn} 
obtained in 
Section \ref{rt}, the latter classes
  cannot possibly be tautological. 

The $1058148$ classes actually span a small fraction of the
full cohomology  $H^{12,12}(\overline{M}_{2,21})$.
There are also
$$124334448501272723333691$$ classes $L^{12}$ not arising     
via the isomorphism \eqref{izom} --- all
the irreducible representations of length at most $12$
arise in the coefficients.

\subsection{Genus 3}
\label{gen333}
In recent work \cite{BFG2},  Bergstr\"om, Faber, and van der Geer
have extended the point counts of moduli spaces of curves over finite
fields to the genus~$3$ case.  
For all $(n_1,n_2,n_3)$ and all prime powers $q\le17$, the frequencies 
$$\sum_C \frac{1}{\#\Aut_{\mathbb{F}_q}(C)}$$ have been computed,
where we sum over isomorphism classes of curves $C$ 
over $\mathbb{F}_{q}$ with exactly $n_i$ points
over  $\mathbb{F}_{q^i}$.
The $\Sigma_n$-equivariant  counts $\#M_{3,n}(\mathbb{F}_{q})$ are
then determined.

The isomorphism classes of Jacobians of nonsingular curves of genus~$3$
form an open subset of the moduli space $A_3$ of principally polarized
abelian varieties of 
dimension $3$.{\footnote{For $g\ge4$, Jacobians
have positive codimension in $A_g$.}} 
The complement can be easily understood.
Defining the $\mathsf{Sp}(6,\mathbb{Z})$
local systems $\mathbb{V}_{a,b,c}$ in expected manner,
the traces of $\text{Frob}_q$ on
$e_c(M_3,\mathbb{V}_{a,b,c})$ and $e_c(A_3,\mathbb{V}_{a,b,c})$ can
be computed for $q\le17$ and arbitrary $a\ge b\ge c\ge0$.

Interpretation of the results has been
quite successful for $A_3\,$: an explicit conjectural formula for
$e_c(A_3,\mathbb{V}_{a,b,c})$ has been found in \cite{BFG2}, 
compatible with all known results
(Faltings \cite{falt}, Faltings-Chai \cite{faltchai}, 
the dimension formula for the spaces of classical
Siegel modular forms \cite{Tsuy}, 
the numerical Euler characteristics \cite{BvdG}).
Two features of the formula are quite striking.
First, 
it has a simple
structure in which the endoscopic and Eisenstein contributions for genus~$2$
play an essential role.
Second, it predicts the existence of
many vector valued Siegel modular forms that are lifts, connected
to local systems of {\em regular\/} weight{\footnote{An analogous
phenomenon occurs for Siegel modular forms of genus~$2$ and level~$2$,
see \cite{BFG1}, \S6.}},
see \cite{BFG2}.

Interpreting the results for $M_3$ is much more difficult.
In fact, the stack $M_3$ cannot be
correctly viewed as an open part of $A_3$. Rather, $M_3$ is a (stacky)
double cover of the locus $J_3$ of Jacobians of nonsingular curves,
branched over the locus of hyperelliptic Jacobians.
The double covering occurs because
$$\Aut(\text{Jac}_0(C))=\Aut(C)\times\{\pm1\}$$
for $C$ a non-hyperelliptic curve,
while equality 
of the automorphism groups 
holds for $C$ hyperelliptic.
As a result, the local systems $\mathbb{V}_{a,b,c}$
of odd weight $a+b+c$
will in general have non-vanishing cohomology on $M_3$, while
their cohomology on $A_3$ vanishes. We therefore can not
expect that the main part of the cohomology of $\mathbb{V}_{a,b,c}$ on $M_3$
can be explained in terms of Siegel cusp forms when $a+b+c$ is odd.
On the other hand, the local systems of even weight provide no
difficulties. 
Bergstr\"om \cite{Bg3} has proved that
the Euler characteristics $e_c(M_3,\mathbb{V}_{a,b,c})$
are certain explicit polynomials in~$L$, for $a+b+c\le7$.

In fact, calculations \cite{BFG3} show motives not associated to
Siegel cusp forms must show up in $e_c(M_3,\mathbb{V}_{11,3,3})$
and $e_c(M_3,\mathbb{V}_{7,7,3})$, and therefore in 
$e_c^{\Sigma_{17}}(\M_{3,17})$, with the
irreducible representations of type $[3^3\,1^8]$ and $[3^3\,2^4]$.
Teichm\"uller modular forms \cite{Ichi} should play an important role  
in accounting for  the cohomology in genus 3
not explained by Siegel cusp forms.

\section{Representation theory}
\label{rt}

\subsection{Length bounds}
Consider the standard moduli filtration
\begin{equation*}
\overline{M}_{g,n} \supset M^c_{g,n} \supset  M^{rt}_{g,n} 
\end{equation*}
discussed in Section \ref{coh}.
We consider only pairs $g$ and $n$  which satisfy the stability
condition
$$2g-2+n>0\ .$$
If $g=0$, all three spaces are equal by definition
$$\overline{M}_{0,n}=M^c_{0,n}=M^{rt}_{0,n}\ .$$
If $g=1$, the latter two are equal
$$M^c_{1,n}=M^{rt}_{1,n}\ .$$
For $g\geq 2$, all three are different.

The symmetric group $\Sigma_n$ acts on $\overline{M}_{g,n}$
by permuting the markings. Hence, $\Sigma_n$-actions are
induced on the tautological rings
\begin{equation}\label{gt559}
R^*(\overline{M}_{g,n}), \ \ R^*(M^c_{g,n}), \ \ R^{*}(M^{rt}_{g,n})\ .
\end{equation}
By Corollary \ref{fdr}, all the rings \eqref{gt559} 
are finite dimensional representations of $\Sigma_n$.

We define the {\em length} of an irreducible representation
of $\Sigma_n$ to be the number of parts
in the corresponding partition of $n$. The trivial representation
has length $1$, and the alternating representation has length $n$.
We define the length $\ell(V)$ of a finite dimensional representation $V$
of $\Sigma_n$ to be the maximum of the lengths of the 
irreducible constituents.

Our main result here bounds the lengths of the tautological 
rings in all cases \eqref{gt559}. 

\begin{thm} \label{snsn}
For the tautological rings of the moduli
spaces of curves, we have

\vspace{7pt}
\begin{enumerate}
\item[(i)]
$\qquad\ell(R^k(\M_{g,n}))\le\min\left(
k+1,3g-2+n-k, \left\lfloor\tfrac{2g-1+n}{2}\right\rfloor\right)$, 
\vspace{7pt}
\item[(ii)]
$\qquad\ell(R^k(M_{g,n}^c))\le\min\left(
k+1,2g-2+n-k\right)$, 
\vspace{7pt}
\item[(iii)]
$\qquad\ell(R^k(M_{g,n}^{rt}))\le\min\left(
k+1,g-1+n-k\right)$.
\end{enumerate}
\end{thm}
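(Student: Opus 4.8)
The plan is to prove all three bounds simultaneously by exploiting the additive generators of Theorem \ref{maude} together with the pushforward description of the tautological rings. The key observation is that the $\Sigma_n$-action permutes the legs of the stable graphs $A$, so each orbit of generators $\xi_{A*}(\prod_v\theta_v)$ spans a $\Sigma_n$-subrepresentation obtained by inducing up from the stabilizer of $A$. The length of such an induced representation is controlled by how the $n$ legs get distributed among the vertices of $A$: if the legs attached to a single vertex $v$ number $n(v)_{\mathrm{legs}}$, then the induced representation has length at most roughly $\sum_v (\text{length of the vertex contribution})$, and a single $\Sigma_m$-representation sitting on a vertex with $m$ legs has length at most $m$. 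So the first step is to reduce the length bound to a combinatorial count over stable graphs.

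The second and crucial step is to bound the length of the vertex contributions. This is where the ``strong vanishing'' of Proposition 2 of \cite{FPrel} (with tautological boundary terms) enters, as advertised in the introduction: on each vertex moduli space $\M_{g(v),n(v)}$ the relevant tautological class is a monomial $\theta_v$ in $\psi$ and $\kappa$ classes, and one needs to argue that after accounting for the symmetry such a monomial, pushed into $\M_{g,n}$, cannot produce irreducible constituents that are ``too long.'' The point is that $\psi$ and $\kappa$ monomials carry very little leg-symmetry-breaking information: a product $\psi_1^{a_1}\cdots\psi_m^{a_m}$ on $m$ legs is an induced class from a Young subgroup, and its length is bounded by the number of distinct exponents, hence by something like $\lfloor(2g(v)-1+n(v))/2\rfloor$ via the strong vanishing (which kills high powers of $\psi$ relative to the genus). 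Summing these vertex bounds over the graph and optimizing over the possible graphs should yield the three claimed minima: the term $k+1$ comes from the codimension (a class in $R^k$ can involve at most $k$ ``$\psi/\kappa$-units'' plus the fundamental class), the middle term is the dual statement via the Poincaré-type grading $3g-2+n-k$ on $\M_{g,n}$ (respectively $2g-2+n-k$ on compact type, $g-1+n-k$ on rational tails, reflecting the drop in dimension of the boundary strata available in each case), and the last term $\lfloor(2g-1+n)/2\rfloor$ is the genus-theoretic ceiling coming directly from strong vanishing on $\M_{g,n}$ itself.

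The third step is to handle the three cases $\M_{g,n}$, $M^c_{g,n}$, $M^{rt}_{g,n}$ in parallel by noting that the tautological rings of the latter two are images of $R^*(\M_{g,n})$ under restriction, so their lengths are bounded by the length of the corresponding piece of $R^*(\M_{g,n})$; the improvement in the bound comes because the generators surviving restriction to $M^c_{g,n}$ (tree graphs only) or $M^{rt}_{g,n}$ (graphs with a single positive-genus vertex carrying all the genus) involve strictly fewer boundary edges, tightening the combinatorial count and removing the need for the genus ceiling in cases (ii) and (iii). One must be a little careful that restriction is $\Sigma_n$-equivariant and surjective onto the tautological subring, but that is exactly the content of the compatibility of properties (iii)–(iv) recorded in Sections \ref{furth} and \ref{coh}.

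The main obstacle I expect is Step 2: controlling the length contributed by a $\psi$–$\kappa$ monomial on a vertex and, in particular, packaging the strong vanishing of \cite{FPrel} so that it gives a clean bound on the number of distinct exponents (equivalently, on the length of the corresponding induced-from-Young-subgroup representation) rather than merely a vanishing statement. The interaction between the induction over the graph's automorphisms and the induction within each vertex — making sure the lengths genuinely add rather than multiply — is the delicate point, and getting the three different ``dual'' terms $3g-2+n-k$, $2g-2+n-k$, $g-1+n-k$ to fall out uniformly will require tracking the dimension of $\M_A$ carefully in each of the three filtration stages.
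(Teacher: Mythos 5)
Your skeleton --- additive generators from Theorem \ref{maude}, length-additivity of induced representations (the paper's Proposition \ref{auxi}), and the strong vanishing of Proposition 2 of \cite{FPrel} as the key input --- matches the paper's strategy. But there are two genuine gaps in how you propose to extract the actual bounds.

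First, you derive the second term in each minimum ($3g-2+n-k$, $2g-2+n-k$, $g-1+n-k$) as ``the dual statement via the Poincar\'e-type grading.'' No such duality is available: Poincar\'e duality for $R^*(\M_{g,n})$ is precisely the open Gorenstein conjecture, and the paper is careful to say only that its bounds are \emph{consistent} with that conjecture. The one place duality is legitimately used is genus $0$, where Keel's theorem identifies $R^*(\M_{0,n})$ with $H^*(\M_{0,n})$ and gives $\ell(R^{k_i}(\M_{0,m_i}))\le m_i-k_i-2$ for each rational tree $T_i$ hanging off a stable graph. The dual-looking bounds in higher genus are then obtained by explicit bookkeeping (using $k=\sum_i k_i+t+e+\sum_j js_j$ and $n=m+\sum_i m_i-2t$, etc.), with the inequality $e+\sum_j js_j\le g-1$ at each positive-genus vertex supplying the genus-dependent term. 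Your ``genus-theoretic ceiling'' $\lfloor(2g-1+n)/2\rfloor$ is likewise not a direct output of strong vanishing; it is just the elementary fact that $\min(k+1,\,2g-2+n-k)\le\lfloor(2g-1+n)/2\rfloor$, applied to the compact-type estimate.

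Second, you never say what happens when strong vanishing is actually invoked. When $e+\sum_j js_j\ge g$ at a vertex, Proposition 2 of \cite{FPrel} rewrites the vertex class as a sum of tautological boundary terms, some of which are \emph{not} of compact type --- they are push-forwards from $R^*(\M_{g-1,n+2})$. For statements (ii) and (iii) such terms restrict to zero, which is why your Step 3 closes; but for statement (i) they must be controlled by an induction on the genus, applying the already-established bounds on $\M_{g-1,n+2}$ (and checking that the numerology $3(g-1)-2+(n+2)-(k-1)=3g-2+n-k$ works out). Without this induction the argument for (i) does not terminate.
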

\vspace{7pt}

The length bounds in all cases are consistent
with the conjectures of Poincar\'e duality for these 
tautological rings, see \cite{F,FPlog,HL,P}. 
For (i), the bound is invariant
under 
$$k\  \longleftrightarrow\  3g-3+n-k \ .$$
 For (ii), the bound is
invariant under 
$$k\  \longleftrightarrow\  2g-3+n-k \ $$
which is consistent with a socle in degree $2g-3+n$.
 For (iii), the bound is
invariant under 
$$k\  \longleftrightarrow\  g-2+n-k \ $$
which is consistent with a socle in degree $g-2+n$ (for $g>0$).

In genus $g$, the bound  
$\left\lfloor\tfrac{2g-1+n}{2}\right\rfloor$
in case (i) 
improves upon the trivial length 
bound $n$ only for $n\ge 2g$.

\subsection{Induction}
The proof of Theorem \ref{snsn} relies heavily on a simple
length property of induced representations of symmetric
groups.

Let $V_1$ and $V_2$ be representations
of $\Sigma_{n_1}$ and $\Sigma_{n_2}$  of lengths $\ell_1$ and $\ell_2$
respectively. 
For $n=n_1+n_2$, we view 
$$
\Sigma_{n_1}\times \Sigma_{n_2}
\subset \Sigma_n$$ in the
natural way.

\begin{prop} \label{auxi}
We have
$\ \ell\left(\rm{Ind}_{\Sigma_{n_1}\times \Sigma_{n_2}}^{\Sigma_n}
V_1 \otimes V_2
\right)=\ell_1+\ell_2\,.$
\end{prop}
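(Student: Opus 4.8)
The plan is to compute the length of the induced representation using the Littlewood–Richardson rule, since the character of $\mathrm{Ind}_{\Sigma_{n_1}\times\Sigma_{n_2}}^{\Sigma_n} V_1\otimes V_2$ corresponds under the characteristic map to the product $\mathrm{ch}(V_1)\cdot\mathrm{ch}(V_2)$ of symmetric functions. Writing $V_1$ and $V_2$ as sums of irreducibles, by linearity it suffices to treat the case where $V_1$ and $V_2$ are themselves irreducible, corresponding to partitions $\lambda$ of $n_1$ and $\mu$ of $n_2$ with $\ell(\lambda)=\ell_1$ and $\ell(\mu)=\ell_2$. Then the irreducible constituents of the induced representation are exactly the $S_\nu$ for which the Littlewood–Richardson coefficient $c^\nu_{\lambda\mu}$ is positive, and I must show $\max_\nu \ell(\nu)=\ell_1+\ell_2$, where the max is over such $\nu$.

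The key steps: first, the upper bound. If $c^\nu_{\lambda\mu}>0$ then a standard property of Littlewood–Richardson coefficients is that $\ell(\nu)\le\ell(\lambda)+\ell(\mu)$ — indeed $\nu$ is obtained from $\lambda$ by adding a skew shape of content $\mu$, and the semistandard filling of that skew shape with a lattice word uses entries in $\{1,\dots,\ell(\mu)\}$, so it occupies at most $\ell(\mu)$ rows below those of $\lambda$; hence the Schur expansion of $s_\lambda s_\mu$ only involves partitions of length $\le \ell_1+\ell_2$. Since the length of a sum of such constituents is the max of their lengths, this gives $\ell(\mathrm{Ind}\,V_1\otimes V_2)\le \ell_1+\ell_2$ in the irreducible case, and then in general since length is monotone under taking summands. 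Second, the lower bound: I exhibit one partition $\nu$ of length exactly $\ell_1+\ell_2$ with $c^\nu_{\lambda\mu}>0$. The natural candidate is the partition obtained by stacking the columns of $\mu$ directly below $\lambda$ — more precisely, $\nu = (\lambda_1,\dots,\lambda_{\ell_1},\mu_1,\dots,\mu_{\ell_2})$ is not generally a partition, so instead take $\nu$ to be the partition whose parts are the multiset union of the parts of $\lambda$ and $\mu$, i.e. the "concatenate and sort" partition. One checks directly that $c^\nu_{\lambda\mu}\ge 1$ by producing an explicit Littlewood–Richardson tableau: the skew shape $\nu/\lambda$ can be filled so that row $i$ of the $\mu$-part contributes a horizontal strip of $i$'s, yielding a lattice word; this is precisely the classical fact that $s_\lambda\cdot s_\mu$ contains $s_\nu$ with coefficient $\ge1$ when the parts of $\nu$ are those of $\lambda$ and $\mu$ combined. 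Since this $\nu$ has $\ell_1+\ell_2$ parts, the lower bound follows.

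The main obstacle is making the lower bound argument clean: one must verify that "concatenate-and-sort" genuinely yields a valid Littlewood–Richardson tableau for \emph{arbitrary} $\lambda$ and $\mu$ (not just, say, when $\lambda$ and $\mu$ interleave nicely), and in particular that the semistandardness across rows shared by $\lambda$ and the $\mu$-strip is not violated. An alternative, perhaps more transparent route to the lower bound avoiding tableaux entirely: restrict back down. Since $\langle \mathrm{Ind}_{\Sigma_{n_1}\times\Sigma_{n_2}}^{\Sigma_n} V_1\otimes V_2,\ S_\nu\rangle = \langle V_1\otimes V_2,\ \mathrm{Res}^{\Sigma_n}_{\Sigma_{n_1}\times\Sigma_{n_2}} S_\nu\rangle$ by Frobenius reciprocity, it is enough to find $\nu$ with $\ell(\nu)=\ell_1+\ell_2$ whose restriction contains $V_1\otimes V_2$; choosing $\nu$ so that its Young diagram literally contains the diagram of $\lambda$ in its top rows and a translate of $\mu$ in its bottom $\ell_2$ rows (with enough horizontal separation to make the two blocks "non-interacting") makes this a one-line check from the Littlewood–Richardson rule applied to the restriction, at the cost of letting $n=n_1+n_2$ force a specific shape. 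I would carry out whichever of these is shortest; either way the representation-theoretic content is entirely the monotonicity of length under induction combined with one explicit extremal $\nu$.
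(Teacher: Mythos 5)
Your proof is correct, but it takes a genuinely different route from the paper's. You compute the Schur expansion of $s_{\lambda}s_{\mu}$ via the Littlewood--Richardson rule: the upper bound $\ell(\nu)\le \ell(\lambda)+\ell(\mu)$ for any $\nu$ with $c^{\nu}_{\lambda\mu}>0$, together with one extremal constituent $\nu=\lambda\cup\mu$ (the sorted multiset union of the parts) satisfying $c^{\lambda\cup\mu}_{\lambda\mu}\ge 1$. The paper avoids the LR rule entirely and instead works with the permutation modules $U_{\alpha}=\mathrm{Ind}_{\Sigma_{\alpha}}^{\Sigma_m}\mathbf{1}$: by transitivity of induction, $\mathrm{Ind}(U_{\lambda}\otimes U_{\mu})=U_{\lambda\cup\mu}$ on the nose, and Young's rule (unitriangularity of the Kostka matrix, with $K_{\mu\lambda}\ne 0$ forcing $\ell(\mu)\le\ell(\lambda)$) transfers the statement from the $U$'s to the irreducibles $V_{\lambda}$. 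So the paper's key input is Young's rule, yours is the LR rule, and both hinge on the same extremal partition $\lambda\cup\mu$. One caution on your lower bound: the explicit filling you sketch --- row $i$ of the $\mu$-part filled entirely with $i$'s, one horizontal strip per row --- is the canonical LR tableau for the \emph{other} extremal shape $\nu=\lambda+\mu$ (partwise sum), whose length is only $\max(\ell_1,\ell_2)$; for $\lambda\cup\mu$ the skew shape $(\lambda\cup\mu)/\lambda$ is not filled row-by-row in this way. The clean fix is exactly the conjugation you implicitly have available: $c^{\nu}_{\lambda\mu}=c^{\nu'}_{\lambda'\mu'}$ and $(\lambda\cup\mu)'=\lambda'+\mu'$, so $c^{\lambda\cup\mu}_{\lambda\mu}=c^{\lambda'+\mu'}_{\lambda'\mu'}=1$. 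With that (or your Frobenius-reciprocity variant, which is the same coefficient read through restriction), and noting that your reduction to irreducibles is legitimate because induction is additive and the length of a direct sum is the maximum of the lengths of its summands, the argument closes. What the paper's route buys is elementarity (no LR rule needed, and the extremal term appears automatically as $U_{\lambda\cup\mu}$); what yours buys is a direct description of the full set of constituents of the induced module.
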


\begin{proof}
To prove the result,
we may certainly assume $V_i$ is an irreducible representation
$V_{\lambda_i}$. In fact, we will exchange $V_{\lambda_i}$
for a representation more closely related to induction.

For $\alpha=(\alpha_1,\dots,\alpha_k)$ a partition of $m$,
let $U_{\alpha}$ be the representation of $\Sigma_m$
induced from the trivial representation of the Young subgroup
$$ \Sigma_{\alpha}= \Sigma_{\alpha_1}\times \cdots \times \Sigma_{\alpha_k} $$
of $\Sigma_m$.
Young's rule
expresses $U_{\lambda}$  
in terms of irreducible representations $V_{\mu}$ for which $\mu$ precedes
$\lambda$ in the lexicographic ordering,
\begin{equation}\label{gvtt2}
U_{\lambda}\cong V_{\lambda} \oplus 
\bigoplus_{\mu>\lambda} K_{\mu\lambda}V_{\mu}\,.
\end{equation}
The Kostka number $K_{\mu\lambda}$ does not vanish
if and only if
$\mu$ dominates $\lambda$, which implies $\ell(\mu)\leq\ell(\lambda)$.
Therefore $V_{\lambda}-U_{\lambda}$ can be written in the
representation ring as a ${\mathbb Z}$-linear combination of $U_{\mu}$
for which $\mu>\lambda$ and $\ell(\mu)\leq\ell(\lambda)$.

If we let
$V_1 = U_{\lambda_1}$ and $V_2=U_{\lambda_2}$, the induction of
representations is
easy to calculate,
$${\rm Ind}_{\Sigma_{n_1}\times\Sigma_{n_2}}^{\Sigma_{n_1+n_2}}
U_{\lambda_1} \otimes U_{\lambda_2}
= {\rm Ind}_{\Sigma_{\lambda_1}\times\Sigma_{\lambda_2}}^{\Sigma_{n_1+n_2}}
{\bf 1}
\cong {\rm Ind}_{\Sigma_{\lambda_1+\lambda_2}}^{\Sigma_{n_1+n_2}}
{\bf 1}
= U_{\lambda_1+\lambda_2}\,,$$
where $\lambda_1+\lambda_2$ is the partition of $n$ consisting of the
parts of $\lambda_1$ and $\lambda_2$, reordered.
Hence, the Proposition is proven for $U_{\lambda_1}$ and $U_{\lambda_2}$. 
By the decomposition \eqref{gvtt2}, the Proposition follows
for  $V_{\lambda_1}$ and $V_{\lambda_2}$.
\end{proof}

\subsection{Proof of Theorem \ref{snsn}}
\subsubsection{Genus 0}
The genus 0 result
$$\ell(R^k(\M_{0,n}))\le\min\left(k+1,n-k-2\right)$$
plays an important role in the rest of the
proof of Theorem \ref{snsn} and will be proven first.

By Keel \cite{Keel}, we have isomorphisms
$$A^k(\M_{0,n})=R^k(\M_{0,n})=H^{2k}(\M_{0,n}).$$
Poincar\'e
duality then implies 
\begin{equation}\label{p459}
\ell(R^k(\M_{0,n}))
=\ell(R^{n-3-k}(\M_{0,n}))\ .
\end{equation}
The vector space $R^k(\M_{0,n})$ is generated by
strata classes of curves with $k$ nodes (and hence with $k+1$
components). The bound
$$\ell(R^k(\M_{0,n}))\le k+1$$
follows by a repeated application
of Proposition \ref{auxi} or by observing 
$$\ell(U_{\lambda})=\ell(\lambda),$$ for any partition $\lambda$
(here with at most $k+1$ parts).
The bound
$$\ell(R^k(\M_{0,n}))\le n-k-2$$
is obtained by using \eqref{p459}. \qed

\subsubsection{Rational tails}
We prove next a length bound for the subrepresentations of
$R^k(\M_{g,n})$ generated by the decorated strata classes of
curves with rational tails, in other words, by the classes
$$\xi_{A*}\Big(\prod_{v\in V(A)} \theta_v\Big)$$
as in Theorem \ref{maude}, where $A$ is a stable graph of genus
$g\ge1$ with $n$ legs and exactly one vertex of genus $g$ (and all
other vertices of genus $0$).

Let $\alpha$ be such a class of codimension $k$ and
 graph $A$.
Let $v$ be the vertex of genus $g$. We may assume  the
decoration is supported only at $v$, so 
$$\alpha=\xi_{A*}\theta_v\,.$$
The graph $A$ arises by attaching $t$ rational trees $T_i$ to $v$.
Let  $T_i$ have (before attaching) $k_i$ edges and $m_i$
legs. Let $m\ge t$ denote the valence of $v$. 
After attaching
the trees, $v$ has $m-t$ legs.
Of the $m-t$ legs, let  $s_0$  come
without $\psi$ class, $s_1$ come with $\psi^1$, 
$s_2$ come with $\psi^2$, and so forth, so 
 $$\sum_j s_j=m-t\ .$$
After discarding
the $s_j$ which vanish, we obtain a partition of $m-t$ with $p$
positive parts. The $m-t$ legs contribute a class
$\zeta_v$ of degree $\sum_j js_j$ to $\theta_v$. 
Let
$\eta_v$ be the product of the $\kappa$ classes at the
vertex and the $\psi$
classes at the $t$ legs at which the trees are attached, so 
$$\theta_v=\zeta_v\eta_v\ .$$
 Let $e$ be the degree of $\eta_v$. We
clearly have
$$ k=\sum_i k_i + t + e + \sum_j js_j \qquad {\rm and} \qquad
n=m + \sum_i m_i - 2t.$$ 

At the vertex $v$, we obtain a
representation of $\Sigma_{m-t}$ of length $p$. By the result for
genus $0$, each tree $T_i$ generates a subrepresentation $V_i$ of
$R^{k_i}(\M_{0,m_i})$ with
$$\ell(V_i)\le\min(k_i+1,m_i-k_i-2).$$
One of the $m_i$ legs is used in attaching $T_i$ to $v$.
After restricting $V_i$ to $\Sigma_{m_i-1}$, the length can only
decrease. 
Applying Proposition \ref{auxi}, we see  $\alpha$
generates a subrepresentation $V$ of $R^k(\M_{g,n})$ with
$$\ell(V)\le p+\sum_i \min(k_i+1,m_i-k_i-2).$$

Taking the first terms in the minimum expressions,
we have
\begin{equation}\label{ghrr4}
 \ell(V)\le p+\sum_i(k_i+1) = p+k-e-\sum_j js_j\,.
\end{equation}
Since
$$\sum_j js_j=\sum_{j\ge1:\,s_j>0}js_j \ge \sum_{j\ge1:\,s_j>0} 1
=p-1+\delta_{s_0,0}\,,$$ 
we see
$$p-\sum_j js_j\le
1-\delta_{s_0,0}\le 1\ .$$ 
After substituting in \eqref{ghrr4} and using $e\geq 0$, we conclude
\begin{equation}\label{gtt992}
\ell(V)\le k+1.
\end{equation}

Taking the second terms in the minimum expressions,
we have
\begin{eqnarray*}  \ell(V) &\le& p+\sum_i(m_i-k_i-2)\\ & = &
p+n-m+t+e+\sum_j js_j-k\\
&=&n-k+(p-m+t)+e+\sum_j js_j\,.\end{eqnarray*} 
The bound 
\begin{equation}\label{gby77}
e+\sum_j js_j\le g-1
\end{equation}
will be  assumed here.
Further, $p\le m-t$, so
\begin{equation}\label{gtt993}
 \ell(V)\le g-1+n-k.
\end{equation}
Combining the bounds \eqref{gtt992} and \eqref{gtt993}
yields
$$\ell(V)\le \min(k+1,g-1+n-k)$$
for any subrepresentation $V$ of $R^k(\M_{g,n})$ generated by a
class supported on a stratum of curves with rational tails
and respecting the bound \eqref{gby77}.

If the bound \eqref{gby77} is violated, then
$\theta_v$ can be expressed as a boundary class at the
vertex $v$  
by Proposition 2 of \cite{FPrel}. The boundary class will
include terms which are supported on strata of curves with
rational tails (to which the argument can be applied again)
and terms which are not supported on rational tails strata
(to which the argument can not be applied). 
Statement (iii) of Theorem \ref{snsn}  is an immediate consequence. \qed

\subsubsection{Compact type}
We prove here a length bound for the subrepresentations of
$R^k(\M_{g,n})$ generated by
the decorated strata classes of curves of
compact type with $g\geq 1$ and $n\geq 1$.

Let $\beta$ be such a class of codimension $k$ and 
graph $B$. 
Let $B$ have $b$ vertices $v_a$ of
positive genus $g_a$ and valence $n_a\ge1$. 
The graph $B$ arises
by attaching $t$ rational trees $T_i$ to the vertices $v_a$.
Let $T_i$ have (before attaching) $k_i$ edges and $m_i$ legs of which
 $l_i\ge1$
legs are used in the attachment. 
We also allow the degenerate case
in which $T_i$ represents a single node, then 
$$k_i=-1\ \ \ \text{ and } \ \ \ 
l_i=m_i=2\ .$$
At the vertex $v_a$, the attachment of the trees uses
$\widehat{n}_a$ of the $n_a$ legs. 
Just as in the case of curves with
rational tails, we keep track of the powers of $\psi$ classes
attached to the remaining $n_a-\widehat{n}_a$ legs. Let $s_{a,j}$ of those
legs come with $\psi^j$. We obtain a partition of $n_a-\widehat{n}_a$ 
with
length $p_a$. Finally, let $e_a$ be the degree of the product of
the $\kappa$ classes at $v_a$ and the $\psi$ classes at the $\widehat{n}_a$
attachment legs. We have
$$g=\sum_a g_a\,,\qquad n=\sum_i m_i+\sum_a
n_a-2\sum_i l_i\ ,$$ 
$$k=\sum_i k_i+\sum_i l_i+\sum_a e_a+\sum_{a,j} js_{a,j}\ ,$$
$$\sum_i l_i = \sum_a \widehat{n}_a = t+b-1\ .$$
The last equality follows since
 $B$ is a tree.

Applying Proposition \ref{auxi}, we see that
$\beta$ generates a subrepresentation $V$ of $R^k(\M_{g,n})$ with
$$\ell(V)\le \sum_a p_a +\sum_i \min(k_i+1,m_i-k_i-2).$$ 
Taking the first terms of the minimum expressions,
\begin{eqnarray*}
\ell(V)&\le& \sum_a p_a +\sum_i (k_i+1)\\
&=&\sum_a p_a + t+k-\sum_i l_i - \sum_a e_a -\sum_{a,j} js_{a,j}\\
&=& k+1-b+\sum_a p_a-\sum_a e_a-\sum_{a,j}
js_{a,j}\,.\end{eqnarray*} As in the rational tails case,
$$\sum_j js_{a,j}\ge p_a-1,\qquad\text{so}\qquad \sum_{a,j}
js_{a,j}\ge \sum_a p_a - b\ .$$ Therefore
$$\ell(V)\le k+1-\sum_a e_a\le k+1\ .$$ 
Taking the second terms of the minimum expressions,
\begin{eqnarray*}
&&\ell(V)\le \sum_a p_a+\sum_i (m_i-k_i-2)\\
&&\qquad{}=\sum_a p_a+n-\sum_a n_a+3\sum_i l_i+\sum_a
e_a+\sum_{a,j}js_{a,j}-k-2t\\
&&\qquad{}= n-k+2b-2+\sum_a(p_a-n_a+\widehat{n}_a)+\sum_a e_a+\sum_{a,j}
js_{a,j}\\
&&\qquad{}\le n-k+2b-2+g-b\\ &&\qquad{}=  n-k+g+b-2\\
&&\qquad{}\le n-k+2g-2.
\end{eqnarray*} 
We have used the bound \eqref{gby77} at every vertex $v_a$. 
Therefore,
$$\ell(V)\le \min(k+1,2g-2+n-k)\le \left\lfloor\tfrac{2g-1+n}{2}
\right\rfloor $$ for any subrepresentation $V$ of $R^k(\M_{g,n})$
generated by a class supported on a stratum of curves of compact
type satisfying the bound \eqref{gby77} everywhere. As before,
Statement (ii) of Theorem \ref{snsn} is an immediate consequence. \qed
\label{oo22}

\subsubsection{Stable curves}
Finally, we prove Statement (i) by induction on the genus $g$.
Statement (i)  
has been proven already for genus $0$. 
We assume (i) is true for
 genus at most $g-1$.

We have three bounds to establish to prove (i).
The first bound to prove is
\begin{equation}\label{pkww}
\ell(R^k(\M_{g,n})) \le k+1  \ . 
\end{equation}
The bound 
holds for the subspace of $R^k(\overline{M}_{g,n})$ generated
by decorated strata classes of compact type on $\M_{g,n}$
satisfying \eqref{gby77} at all vertices by the
results of Section \ref{oo22}. If \eqref{gby77}
is violated, we use Proposition 2 of \cite{FPrel} to express
the vertex term via boundary classes and repeat.
If a class not of compact type arises,
the class must occur as a push
forward from $R^*(\M_{g-1,n+2})$. Here, we use the induction
hypothesis, and conclude the bound \eqref{pkww} for all of 
$R^k(\M_{g,n})$.
In fact, for the subspace in
$R^k(\M_{g,n})$ generated by decorated 
strata not of compact type, 
the bound $\ell\le k+1$ holds by induction.

The second bound in Statement (i) of Theorem \ref{snsn} is
\begin{equation*}
\ell(R^k(\M_{g,n})) \le 3g-2+n -k  \ . 
\end{equation*}
Since $2g-2+n-k< 3g-2+n-k$,
the bound 
holds for the subspace of $R^k(\overline{M}_{g,n})$ generated
by decorated strata classes of compact type on $\M_{g,n}$
satisfying \eqref{gby77} at all vertices by the
results of Section \ref{oo22}.
We conclude as above.
For a class not of compact
type, hence pushed forward from $R^{k-1}(\M_{g-1,n+2})$, we have
by induction
$$\ell\le 3(g-1)-2+(n+2)-(k-1)=3g-2+n-k$$
for the generated subspace.

The third and last bound to consider is 
\begin{equation*}
\ell(R^*(\overline{M}_{g,n}))\le \left\lfloor\tfrac{2g-1+n}{2}\right\rfloor\ .
\end{equation*}
The result
holds for the subspace of $R^*(\overline{M}_{g,n})$ generated
by decorated strata classes of compact type on $\M_{g,n}$
satisfying \eqref{gby77} at all vertices. 
We conclude as above using the
induction to control classes associated to
decorated strata not of compact type.
\qed

\subsection{Sharpness}
In low genera, the length bounds of Theorem \ref{snsn}
are often sharp. In fact, we have not yet seen a failure of sharpness in
genus 0 or 1. In genus $2$, the first failure occurs in
$R^2(\overline{M}_{2,3})$.
A discussion of the data is given here for
$g\leq 2$.

In genus $0$, we have $R^*(\overline{M}_{0,n}) = 
H^*(\overline{M}_{0,n},\mathbb{Q})$. Using the calculation of the
$\Sn$-representation on the latter space \cite{Getzero},
the bound
$$\ell(R^k(\M_{0,n}))\le\min\left(k+1,n-k-2\right)$$
has been verified to be sharp for $3\le n\le20$.
However, the behavior is somewhat subtle.
For $6\le n\le20$, the representation $[n-k-1,2,1^{k-1}]$
of length $k+1$ occurs for $1\le k \le \lfloor \tfrac{n-3}2 \rfloor$.
For $10\le n\le20$, the representation $[n-k,1^k]$
of length $k+1$ occurs for $0\le k \le \lfloor \tfrac{n-3}2 \rfloor$.

Consider next genus $1$. Without too much difficulty,
$R^*(\M_{1,n})$ can be shown to be  Gorenstein (with socle in degree $n$)
for $1\le n\le 5$. 
Yang \cite{Yang} has calculated the ranks of the intersection pairing
on $R^*(\M_{1,n})$ for $n\le5$ and found them to
coincide with the Betti numbers
of $\M_{1,n}$ computed by Getzler \cite{GetzSemi}. Using Getzler's
calculations of the cohomology groups as $\Sigma_n$-representations,
the length bounds for $R^k(\M_{1,n})$ are seen to be sharp
for $n\le5$.

Getzler has claimed  $R^*(\M_{1,n})$
surjects onto the even cohomology on page 973 of \cite{GetzJAMS}
(though a proof has not yet been written).
 Assuming the surjection, we can use the 
$\Sigma_n$-equivariant calculation of the Betti numbers to check
whether the length bounds for $R^k(\M_{1,n})$ are sharp for larger $n$.
We have verified the sharpness for $n\le14$.

Tavakol \cite{Tav1} has proven $R^*(M_{1,n}^c)$ is Gorenstein
with socle in degree $n-1$. Yang \cite{Yang} 
has calculated the ranks of the intersection pairing
for $n\le6$. Using the results for $R^*(\M_{1,n})$, we have verified
the length bounds for $R^k(M_{1,n}^c)$ are sharp for $n\le5$.
Probably, the $\Sigma_n$-action on $R^k(M_{1,n}^c)$ can be analyzed
more directly.

Finally, consider genus $2$. 
The length bounds are easily checked to be sharp for $n=2$.
In fact, 
$$R^*(M_{2,2}^{rt}), \ R^*(M_{2,2}^c), \ \text{and} \ R^*(\M_{2,2})$$
 all are
Gorenstein, with socles in degrees $2$,  $3$, and $5$
respectively.

The case $n=3$ is more interesting.
By Theorem \ref{snsn},
$R^2(\M_{2,3})$ and $R^4(\M_{2,3})$ have length at most $3$.
According to Getzler \cite{TRR}, 
$$H^4(\M_{2,3})\cong H^8(\M_{2,3})$$
has length $2$. Yang \cite{Yang} has shown that all the cohomology
of $\M_{2,3}$ is tautological. The Gorenstein conjecture
for $R^*(\M_{2,3})$ then implies  both 
 $R^2(\M_{2,3})$ and $R^4(\M_{2,3})$ have length $2$. By analyzing
separately
the strata of compact type, the strata for which the dual graph
has one loop, and the strata for which the dual graph has two
loops, we can indeed prove the length 2 restriction.
The length bound for $R^2(M_{2,3}^c)$ is not sharp either.

The failure of sharpness signals unexpected symmetries
among the tautological classes. Such symmetries can come
from combinatorial symmetries of the strata or from unexpected relations. 
For the failure in $R^2(\overline{M}_{2,3})$, the origin
is combinatorial symmetries in the strata. The 
 nontrivial relation of \cite{PBP} is not required.
Also, relations can exist without the failure of sharpness:
Getzler's relation \cite{GetzJAMS} in $R^2(\overline{M}_{1,4})$
causes no problems.

By Tavakol
\cite{Tav2}, $R^*(M_{2,n}^{rt})$ is
Gorenstein with socle in degree $n$. Using the Gorenstein
property, 
we have verified
the length bounds are sharp for $n\le4$ in the rational tails
case.

\section{Boundary geometry}
\label{bg}

\subsection{Diagonal classes}
We have seen the existence of non-tautological cohomology
for $\overline{M}_{1,11}$,
$$H^{11,0}(\overline{M}_{1,11}, \mathbb{C}) = \mathbb{C}, \ \ \
H^{0,11}(\overline{M}_{1,11}, \mathbb{C}) = \mathbb{C}\ .$$
As a result, the diagonal
$$\Delta_{11} \subset \overline{M}_{1,11} \times \overline{M}_{1,11}$$
has K\"unneth components which are not in $RH^*(\M_{1,11})$.
Let 
$$\iota: \M_{1,11} \times \M_{1,11} \rightarrow \M_{2,20}$$
be the gluing map.
A natural question raised in \cite{GP2} is whether
\begin{equation}\label{gh67}
\iota_*[\Delta] \notin RH^*(\M_{2,20}) \ ? 
\end{equation}
Via a detailed analysis of the action of the $\psi$
classes on $H^*(\M_{1,12},\mathbb{C})$, the following
result was proven in \cite{GP2}.

\begin{thm} \label{h889} Let 
$\Delta_{12} \subset \overline{M}_{1,12} \times \overline{M}_{1,12}$
be the diagonal. After push-forward via
the gluing map
 $$\iota: \M_{1,12} \times \M_{1,12} \rightarrow \M_{2,22}\ ,$$
we obtain a non-tautological class
$$\iota_*[\Delta] \notin RH^*(\M_{2,22})\ .$$
\end{thm}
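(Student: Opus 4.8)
The plan is to reduce the non-tautology of $\iota_*[\Delta]$ on $\overline M_{2,22}$ to the known non-tautology of the Künneth components of $[\Delta_{11}]$ on $\overline M_{1,11}\times\overline M_{1,11}$, which follows from $H^{11,0}(\overline M_{1,11},\mathbb C)\cong\mathbb C$ together with the fact that a nonzero Hodge class of type $(11,0)$ cannot be algebraic (hence not tautological). The mechanism for the reduction is to construct a map of pairs that "undoes" the gluing well enough to pull $\iota_*[\Delta]$ back to something visibly involving the bad Künneth components of $[\Delta_{11}]$, and then invoke the fact (property (iii) of the excerpt, valid for $RH^*$ as well) that $RH^*$ is closed under pullback along forgetful and gluing maps, together with the statement that Künneth components of pullbacks of tautological classes are tautological. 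If $\iota_*[\Delta_{12}]$ were tautological on $\overline M_{2,22}$, pulling back along a suitable sequence of gluing and forgetful morphisms would force a tautological conclusion on $\overline M_{1,11}\times\overline M_{1,11}$ that contradicts the $(11,0)$-class.

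The key steps, in order, would be: first, identify the boundary divisor $D\subset\overline M_{2,22}$ that is the image of $\iota\colon\overline M_{1,12}\times\overline M_{1,12}\to\overline M_{2,22}$, and describe the self-intersection / excess-intersection behavior of $\iota$ — in particular compute $\iota^*\iota_*[\Delta_{12}]$ in terms of $[\Delta_{12}]$ and the $\psi$-class at the gluing node on each factor, so that $\iota_*[\Delta_{12}]$ tautological would give $[\Delta_{12}]\cdot(\text{class in }\psi\text{'s})$ tautological on $\overline M_{1,12}\times\overline M_{1,12}$. Second, analyze the action of $\psi_{12}$ on $H^*(\overline M_{1,12},\mathbb C)$ — this is exactly the "detailed analysis of the action of the $\psi$ classes on $H^*(\overline M_{1,12},\mathbb C)$" referred to in the statement — to show that the relevant twisted diagonal class still detects the non-tautological $(11,0)\otimes(0,11)$ and $(0,11)\otimes(11,0)$ Künneth pieces, i.e. that multiplication by $\psi_{12}$ does not kill the obstruction. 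Third, relate $H^*(\overline M_{1,12})$ back to $H^*(\overline M_{1,11})$ via the forgetful map $\pi\colon\overline M_{1,12}\to\overline M_{1,11}$, using $\pi^*$ on cohomology and the Leray/projective-bundle-type decomposition, to see the surviving class genuinely comes from $H^{11,0}(\overline M_{1,11})$ and hence is non-algebraic. Fourth, assemble: tautological-ness of $\iota_*[\Delta_{12}]$, propagated through $\iota^*$ and $\pi_*,\pi^*$, would put a non-algebraic class into $RH^*$, a contradiction.

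The main obstacle I expect is step two: controlling the action of $\psi_{12}$ on the cohomology of $\overline M_{1,12}$ finely enough to guarantee that the twisted diagonal $\iota^*\iota_*[\Delta_{12}]$ still has a non-tautological Künneth component. A priori multiplication by $\psi_{12}$ could conceivably map the problematic $(11,0)$-piece into the span of tautological classes (or annihilate it), and ruling this out requires a genuine computation with the Hodge/weight structure on $H^*(\overline M_{1,12})$ — in particular understanding how the pure weight-$11$, type-$(11,0)$ part sits relative to the tautological subring and how cup product with the degree-$(1,1)$ tautological class $\psi_{12}$ acts on it. This is presumably where the bulk of the work in \cite{GP2} lies, and it is the step that forces the marked-point count up from $11$ to $12$ (so that there is "room" for the $\psi$-twist to remain nonzero) and correspondingly the target from $\overline M_{2,20}$ in \eqref{gh67} to $\overline M_{2,22}$. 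The remaining ingredients — excess intersection for a gluing morphism, closedness of $RH^*$ under the standard pullbacks, and non-algebraicity of a nonzero $(p,0)$-class — are all either standard or already recorded in the excerpt.
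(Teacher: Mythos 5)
Your outline has the right overall shape (excess intersection along the gluing divisor, pullback, reduction to the K\"unneth components of $[\Delta_{11}]$), but the step you yourself flag as the main obstacle --- showing that cup product with the $\psi$-classes at the node does not kill or ``tautologize'' the non-algebraic K\"unneth component of the twisted diagonal --- is the heart of the matter, and your proposal does not resolve it. As written, the argument is: \emph{if} the $\psi$-twist preserves the obstruction, \emph{then} we get a contradiction. That conditional is exactly what the original argument of \cite{GP2} had to establish by a detailed study of the $\psi$-action on $H^*(\overline{M}_{1,12},\mathbb{C})$, so the proposal restates the difficulty rather than overcoming it.

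The paper's new proof sidesteps this entirely, and the device is worth noting. Instead of the symmetric diagonal $\Delta_{12}\subset\overline{M}_{1,12}\times\overline{M}_{1,12}$, one works on $\overline{M}_{2,21}$ with the asymmetric cycle $\Delta_L\subset\overline{M}_{1,12}\times\overline{M}_{1,11}$, defined as the preimage of $\Delta_{11}$ under the map $\pi$ forgetting the extra marking $r$; thus $\pi$ restricted to $\Delta_L$ has $1$-dimensional fibers. The excess intersection formula gives $\iota_L^*\iota_{L*}[\Delta_L]=(-\psi_\star-\psi_\bullet)\cdot[\Delta_L]$, and one pushes this forward by $\pi$. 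The term $\psi_\bullet\cdot[\Delta_L]$ dies because $\pi_*[\Delta_L]=0$ by fiber dimension; for the term $\psi_\star\cdot[\Delta_L]$ one substitutes the boundary expression $\psi_\star=\tfrac{1}{12}[\delta_{irr}]+\sum_S[\delta_S]$ on $\overline{M}_{1,12}$, and every summand still has $1$-dimensional fibers over $\Delta_{11}$ except $\delta_{\{r\}}$, which maps birationally. Hence $\pi_*\iota_L^*\iota_{L*}[\Delta_L]=-[\Delta_{11}]$ exactly, with no Hodge-theoretic input about the $\psi$-action at all, and the non-tautological K\"unneth decomposition of $[\Delta_{11}]$ finishes the argument. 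Theorem \ref{h889} then follows because $\pi_*\iota_*[\Delta_{12}]=\iota_{L*}[\Delta_L]$ under the map $\overline{M}_{2,22}\to\overline{M}_{2,21}$ forgetting one marking, and $RH^*$ is closed under push-forward. So the extra marked point is not there to give the $\psi$-twist ``room to remain nonzero'' as you speculate; it is there precisely so that the forgetful push-forward annihilates everything except one birational boundary term.
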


While we are still unable to resolve \eqref{gh67},
we give a simple new proof of Theorem \ref{h889}
which has the advantage of producing new
non-tautological classes in $H^*(\M_{2,21},\mathbb{Q})$.

\subsection{Left and right diagonals}
We will study curves of genus 2 with 21 markings  
$$[C, p_1, \ldots, p_{10}, q_1, \ldots, q_{10}, r] \in \M_{2,21} \ .$$
Consider the product
$\overline{M}_{1,12} \times \overline{M}_{1,11}$
with the markings of the first factor
given by $\{p_1,\ldots, p_{10},r, \star\}$
and the markings of the second factor given by
$\{q_1,\ldots, q_{10}, \bullet\}$ .
Define the {\em left diagonal} 
$$\Delta_{L}\subset \overline{M}_{1,12} \times \overline{M}_{1,11}$$
to be the inverse image of the diagonal{\footnote{The diagonal
in $\overline{M}_{1,11} \times \M_{1,11}$ is defined by
the bijection $p_i \leftrightarrow q_i$ and $\star\leftrightarrow \bullet$.}}
$$\Delta_{11} \subset \overline{M}_{1,11} \times \overline{M}_{1,11}$$
under the map forgetting the marking $r$,
$$\pi: \overline{M}_{1,12} \times \overline{M}_{1,11}
\rightarrow \overline{M}_{1,11} \times \overline{M}_{1,11}\ .$$
The cycle $\Delta_L$ has dimension 12.

For the right diagonal, 
consider the product
$\overline{M}_{1,11} \times \overline{M}_{1,12}$
with the markings of the first factor
given by $\{p_1,\ldots, p_{10},\star\}$
and the markings of the second factor given by
$\{q_1,\ldots, q_{10}, r,\bullet\}$ .
Define 
$$\Delta_{R}\subset \overline{M}_{1,11} \times \overline{M}_{1,12}$$
to be the inverse image of the diagonal $\Delta_{11}$
under the 
map forgetting the marking $r$,
$$\pi: \overline{M}_{1,11} \times \overline{M}_{1,12}
\rightarrow \overline{M}_{1,11} \times \overline{M}_{1,11}\ ,$$
as before.

Our main result concerns the push-forwards $\iota_{L*}[\Delta_L]$ and
$\iota_{R*}[\Delta_R]$ under
the boundary gluing maps,
$$\iota_L: \overline{M}_{1,12} \times \M_{1,11} \rightarrow \M_{2,21}\ , $$
$$\iota_R: \overline{M}_{1,11} \times \M_{1,12} \rightarrow \M_{2,21}\ , $$
defined by connecting the markings $\{\star,\bullet\}$.

\begin{thm} \label{h8896} 
The push-forwards are non-tautological,
$$\iota_{L*}[\Delta_L], \ \iota_{R*}[\Delta_R] \notin RH^*(\M_{2,21})\ .$$
\end{thm}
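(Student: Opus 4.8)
The strategy is to reduce the non-tautologicality of $\iota_{L*}[\Delta_L]$ and $\iota_{R*}[\Delta_R]$ to the known non-tautological class of Theorem \ref{h889} by pulling back along a forgetful map, following the standard principle that pull-back of a tautological class along a forgetful map is tautological (property (iii)). First I would introduce the map $\pi: \overline{M}_{2,22} \rightarrow \overline{M}_{2,21}$ forgetting the marking $r$ (or rather an appropriately chosen marking), and the analogous map on the genus $1$ factors; the cycles $\Delta_L$ and $\Delta_R$ are defined precisely as preimages of $\Delta_{11}$, and $\Delta_{12}$ maps to $\Delta_{11}$ under forgetting $r$, so there should be a clean geometric relation between $\iota_* [\Delta_{12}]$ on $\overline{M}_{2,22}$ and the classes $\iota_{L*}[\Delta_L]$, $\iota_{R*}[\Delta_R]$ on $\overline{M}_{2,21}$ after pushing or pulling along $\pi$. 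Concretely, I expect $\pi^* \iota_{L*}[\Delta_L]$ (or a $\psi$-twisted variant) to contain $\iota_*[\Delta_{12}]$ as a summand, modulo manifestly tautological correction terms coming from the boundary and excess-intersection contributions when $r$ is remembered.

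The key steps, in order, are: (1) set up the commutative diagram relating the gluing maps $\iota_L, \iota_R$ on $\overline{M}_{2,21}$, the gluing map $\iota$ on $\overline{M}_{2,22}$, and the forgetful maps on both the total spaces and the products of genus $1$ moduli; (2) compute $\pi^* \iota_{L*}[\Delta_L]$ by comparing it with $\iota_*[\pi^*\Delta_{11}]$ on the appropriate product, using the excess intersection / self-intersection formula to identify the correction terms, all of which are products of $\psi$ and $\kappa$ classes pushed forward from boundary strata and hence tautological; (3) observe that if $\iota_{L*}[\Delta_L] \in RH^*(\overline{M}_{2,21})$, then by property (iii) its pull-back $\pi^* \iota_{L*}[\Delta_L]$ lies in $RH^*(\overline{M}_{2,22})$, and subtracting the tautological correction terms would force $\iota_*[\Delta_{12}] \in RH^*(\overline{M}_{2,22})$, contradicting Theorem \ref{h889}; (4) run the symmetric argument for $\Delta_R$.

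The main obstacle will be step (2): carefully computing the pull-back $\pi^* \iota_{L*}[\Delta_L]$ and showing that the discrepancy between it and $\iota_*[\Delta_{12}]$ consists only of tautological classes. This requires a precise understanding of how the forgetful map $\pi$ interacts with the boundary divisor $\iota_L$ — in particular, the fibre of $\pi$ over a point of the boundary can contain a bubbled $\mathbb{P}^1$, so there are excess contributions supported on deeper strata — and of how the diagonal $\Delta_{12} \subset \overline{M}_{1,12}\times\overline{M}_{1,12}$ relates to $\pi^{-1}(\Delta_{11})$, where the latter is reducible (the "main" component being $\Delta_L$ and the extra component being a boundary piece). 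One must check that every extra component and every excess term is a push-forward of a monomial in $\psi$ and $\kappa$ classes from a product of smaller moduli spaces, which is where Theorem \ref{maude} and the closure properties (i)–(iii) do the work. Once the bookkeeping of these tautological corrections is under control, the deduction from Theorem \ref{h889} is immediate, and the final sentence connecting these classes to the length-$\ge 13$ representations appearing in $H^{12,12}(\overline{M}_{2,21})$ via Theorem \ref{snsn} follows from the representation-theoretic discussion of Section \ref{rt}.
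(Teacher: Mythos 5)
Your reduction runs in the wrong logical direction, and the key identity you need in step (2) is false. The actual relation between the two classes is $\pi_*\iota_*[\Delta_{12}]=\iota_{L*}[\Delta_L]$ for $\pi:\M_{2,22}\to\M_{2,21}$ forgetting $s$ (the cycle $\iota(\Delta_{12})$ maps birationally onto $\iota_L(\Delta_L)$). Since tautological classes are closed under $\pi_*$, this shows Theorem \ref{h8896} implies Theorem \ref{h889} --- not the converse, which is what your plan requires. To run your reduction you would have to express $\iota_*[\Delta_{12}]$ in terms of $\iota_{L*}[\Delta_L]$ by tautological operations. But $\pi^*\iota_{L*}[\Delta_L]$ has codimension $12$ in $\M_{2,22}$ while $\iota_*[\Delta_{12}]$ has codimension $13$, so the latter cannot appear as a summand of the former; and the piece of $\pi^{-1}(\iota_L(\Delta_L))$ supported on the boundary divisor with markings split as $\{p_\bullet,r\}\,|\,\{q_\bullet,s\}$ is $\iota_*[\widetilde\Delta]$, where $\widetilde\Delta\subset\M_{1,12}\times\M_{1,12}$ is the full preimage of $\Delta_{11}$ under forgetting $r$ and $s$: a $13$-dimensional cycle (essentially the fiber square of the universal curve over $\M_{1,11}$), not the diagonal $\Delta_{12}$. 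Extracting the relative diagonal from $\widetilde\Delta$ is itself a non-tautological operation --- fiberwise its K\"unneth decomposition involves $H^{11,0}\otimes H^{0,11}$ --- so there is no reason the discrepancy should be tautological, and no $\psi$-twist repairs the degree mismatch in a useful way.

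The paper's proof goes the opposite way and does not use Theorem \ref{h889} at all (it reproves it as a corollary). One computes the self-intersection $\iota_L^*\iota_{L*}[\Delta_L]=(-\psi_\star-\psi_\bullet)\cdot[\Delta_L]$ via the excess formula for the gluing map; if $\iota_{L*}[\Delta_L]$ were tautological, property (iii) would force this class to have tautological K\"unneth components. Pushing forward to $\M_{1,11}\times\M_{1,11}$ by forgetting $r$ kills every term in the boundary expansion of $\psi_\star$ except the one supported on $\delta_{\{r\}}$, giving $\pi_*\iota_L^*\iota_{L*}[\Delta_L]=-[\Delta_{11}]$, whose K\"unneth components are non-tautological because $H^{11,0}(\M_{1,11})\neq 0$. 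If you want to salvage your outline, replace pullback along the forgetful map to $\M_{2,22}$ with pullback along the gluing map $\iota_L$ itself; that is the step that converts the hypothesis into a statement about the genus $1$ diagonal.
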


We view the markings of $\M_{2,22}$ as given by
$\{p_1, \ldots, p_{10},r, q_1, \ldots, q_{10}, s\}$
and the diagonal $\Delta_{12}$ as defined
by the bijection
 $$p_i\leftrightarrow q_i , \ \ r \leftrightarrow s \ .$$
The cycle $\iota(\Delta_{12}) \subset \M_{2,22}$ maps birationally
to 
 $\iota_L(\Delta_{L}) \subset \M_{2,21}$ under
the map
$$\pi: \M_{2,22} \rightarrow \M_{2,21}$$
forgetting $s$. Hence,
$$\pi_* \iota_*[\Delta_{12}] = \iota_{L*}[\Delta_{L}] \ .$$
In particular, Theorem \ref{h8896} implies Theorem
\ref{h889} since tautological classes are closed under
$\pi$-push-forward.

\subsection{Proof of Theorem \ref{h8896}}
We first compute the class 
$$\iota_L^*\iota_{L*}[\M_{1,12} \times \M_{1,11}] \in 
H^*(\overline{M}_{1,12} \times \M_{1,11}, \mathbb{Q})\ .$$
The rules for such self-intersections are given in \cite{GP2}.
Since $\iota_{L}$ is an injection,
$$
\iota_L^*\iota_{L*}[\M_{1,12} \times \M_{1,11}] 
= -\psi_\star - \psi_\bullet \ .$$
As a consequence, we find
\begin{equation}
\iota_{L}^* \iota_{L*} [\Delta_L]   =  (-\psi_\star -\psi_\bullet) \cdot
[\Delta_L]\ .
\end{equation}

If $\iota_{L*}[\Delta_L] \in RH^*(\M_{2,21})$, then
the K\"unneth components of $\iota_{L}^* \iota_{L*} [\Delta_L]$
must be tautological cohomology by property (iii) of Section \ref{furth}.
Let
\begin{equation}\label{nyy}
\pi: \M_{1,12} \times \M_{1,11} \rightarrow \M_{1,11} \times \M_{1,11}
\end{equation}
be the map forgetting $r$ in the first factor.
If the K\"unneth components of $\iota_{L}^* \iota_{L*} [\Delta_L]$
are tautological in cohomology, then the
K\"unneth components of 
$$
\pi_* \iota_{L}^* \iota_{L*} [\Delta_L] \in H^*(
\M_{1,11} \times \M_{1,11} , \mathbb{Q})$$
must also be tautological in cohomology.
We compute
\begin{eqnarray*}
\pi_*\Big( ( -\psi_\star - \psi_\bullet)\cdot [\Delta_L] \Big)& = &
- \pi_*(\psi_\star \cdot [\Delta_L])
-\psi_\bullet \cdot(\pi_* [\Delta_L]) 
\\
& = & - \pi_*(\psi_\star \cdot [\Delta_L])
\end{eqnarray*}
since $\pi$ restricted to $\Delta_L$ has fiber dimension 1.

The class
$\psi_\star \in R^1(\M_{1,12})$ has a well-known boundary expression,
$$\psi_\star = \frac{1}{12}[\delta_{irr}] 
+ \sum_{S \subset \{ p_1, \ldots, p_{10}, r\}, 
\ S\neq \emptyset}  [\delta_S] \ .$$
Here, $\delta_{irr}$ is the `irreducible'
boundary divisor (parameterizing nodal rational curves with
12 markings), and $\delta_S$ is the `reducible'
boundary divisor generically parameterizing 1-nodal curves
$$ \proj^1 \cup E$$
with marking $S \cup \{ \star \}$ on $\proj^1$ and
the rest on $E$.
The intersections
$$ \delta_{irr} \times \M_{1,11} \ \cap \Delta_L, \ \ \ 
    \delta_{S} \times \M_{1,11} \ \cap \Delta_L $$
all have fiber dimension $1$ with respect to \eqref{nyy} except
when $S=\{r\}$. Hence,
$$-\pi_*(\psi_\star \cdot [\Delta_L]) = -\pi_*(
\delta_{\{r\}} \times \M_{1,11} \ \cap  \Delta_L) \ .$$
Since $\pi$ maps $\delta_{\{r\}} \times \M_{1,11} \ \cap  \Delta_L$
birationally onto $\Delta_{11} \subset \M_{1,11} \times \M_{1,11}$,
we conclude
$$
\pi_* \iota_{L}^* \iota_{L*} [\Delta_L] = - [\Delta_{11}]\ \in H^*(
\M_{1,11} \times \M_{1,11} , \mathbb{Q})\ .$$
Since $[\Delta_{11}]$ does not have a tautological K\"unneth
decomposition, the argument is complete.
The proof for $\Delta_R$ is identical.
\qed

\subsection{On $\M_{2,20}$}
We finish with a remark about the relationship between question 
\eqref{gh67} and Theorem \ref{h8896}.
Consider the map forgetting the marking labelled by $r$,
$$\pi: \overline{M}_{2,21} \rightarrow \M_{2,20} \ .$$
We easily see
$$\pi^*\left(\iota_*[\Delta_{11}]\right) = \iota_{L*}[\Delta_L] + 
\iota_{R*}[\Delta_R] \ . $$
The following push-forward relation holds by
calculating the degree of the cotangent line $\mathbb{L}_r$
on the fibers of $\pi$,
$$\pi_*\Big( \psi_r \cdot (\iota_{L*}[\Delta_{L}] + \iota_{R*}[\Delta_R])\Big) 
= 22 \cdot\iota_*[\Delta_{11}] \ .$$
As a consequence of the above two equations, we conclude the following
result.

\begin{prop} We have the equivalence:
$$\iota_*[\Delta_{11}] \in RH^*(\M_{2,20}) \ \ \ \Longleftrightarrow \ \ \
\iota_{L*}[\Delta_{L}] + \iota_{R*}[\Delta_R] \in RH^*(\M_{2,21}) \ .$$
\end{prop}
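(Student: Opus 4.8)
The plan is to derive the equivalence formally from the two displayed identities immediately preceding the statement, namely
$$\pi^*\big(\iota_*[\Delta_{11}]\big) = \iota_{L*}[\Delta_L] + \iota_{R*}[\Delta_R]$$
and
$$\pi_*\Big(\psi_r\cdot\big(\iota_{L*}[\Delta_L]+\iota_{R*}[\Delta_R]\big)\Big) = 22\cdot \iota_*[\Delta_{11}],$$
together with the closure properties (i) and (iii) of the cohomological tautological rings $RH^*$ recorded in Sections \ref{furth} and \ref{coh}. For the implication $\Rightarrow$, I would assume $\iota_*[\Delta_{11}]\in RH^*(\M_{2,20})$; since $RH^*$ is closed under pull-back along the forgetful map $\pi\colon\M_{2,21}\to\M_{2,20}$ by property (iii), it follows that $\pi^*\big(\iota_*[\Delta_{11}]\big)\in RH^*(\M_{2,21})$, and the first identity identifies this class with $\iota_{L*}[\Delta_L]+\iota_{R*}[\Delta_R]$.

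For the implication $\Leftarrow$, I would assume $\iota_{L*}[\Delta_L]+\iota_{R*}[\Delta_R]\in RH^*(\M_{2,21})$. Because $\psi_r\in R^1(\M_{2,21})$ and $RH^*$ is a subring, the product $\psi_r\cdot\big(\iota_{L*}[\Delta_L]+\iota_{R*}[\Delta_R]\big)$ lies in $RH^*(\M_{2,21})$; applying property (i) (closure under push-forward by the forgetful map $\pi$) and then the second identity yields $22\cdot\iota_*[\Delta_{11}]\in RH^*(\M_{2,20})$, and dividing by $22$ — legitimate since we work with $\Q$-coefficients — gives $\iota_*[\Delta_{11}]\in RH^*(\M_{2,20})$. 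This completes both directions.

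There is no substantial obstacle once the two boundary-geometry identities are granted: the proposition is a purely formal consequence. The only points I would be careful to flag are that properties (i) and (iii) genuinely hold for the cohomological tautological rings (they do, by the discussion in Section \ref{coh}) and that the class $\psi_r$ entering the second identity is itself tautological (it is, being a cotangent line class). The real geometric content — that $\pi^{-1}$ of the boundary divisor $\iota(\Delta_{11})\subset\M_{2,20}$ is the union of $\iota_L(\Delta_L)$ and $\iota_R(\Delta_R)$ (accounting for the two ways the dropped point $r$ can land on the two genus-$1$ components), and that $\pi_*\psi_r=\kappa_0=2g-2+n=22$ on $\M_{2,20}$ by the projection formula applied to $\iota_{L*}[\Delta_L]+\iota_{R*}[\Delta_R]=\pi^*\iota_*[\Delta_{11}]$ — is precisely what underlies the two displayed equations and is taken as given here.
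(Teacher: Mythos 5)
Your proposal is correct and is exactly the argument the paper intends: the proposition is stated there as an immediate formal consequence of the same two displayed identities, with the forward direction using closure of $RH^*$ under pull-back (property (iii)) and the reverse direction using that $\psi_r$ is tautological, closure under products and push-forward (property (i)), and division by $22$ over $\Q$. No discrepancies to report.
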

\vspace{8pt}

\subsection{Connection with representation theory}
Consider again the cycle $\Delta_L \subset \M_{1,12}\times\M_{1,11}$.
Let $$\Gamma_L\in  H^{11,11}(\M_{1,12}\times\M_{1,11}
)$$ be the K\"unneth component of $\Delta_L$. 
We can consider the $\Sigma_{21}$-submodule 
$$\mathsf{V} \subset H^{12,12}(\M_{2,21})$$
generated by $\iota_{L*}(\Gamma_L)$. The class $\Gamma_R$
can be defined in the same manner, and  
$\iota_{R*}(\Gamma_R) \in \mathsf{V}$.

The class $\Gamma_L$ is alternating for the 
symmetric group $\Sigma_{10}$ permuting the points $p_1, \ldots, p_{10}$.
Similarly, $\Gamma_L$ is alternating for the 
$\Sigma_{10}$ permuting the points $q_1, \ldots, q_{10}$.
Let 
$$\Sigma_{10}\times \Sigma_{10}\subset \Sigma_{21}$$ be the
associated subgroup. 
We can then consider the $\Sigma_{21}$-module defined by
$$\widetilde{\mathsf{V}} = 
\text{Ind}^{\Sigma_{21}}_{\Sigma_{10}\times \Sigma_{10}} \big( 
\alpha\otimes\alpha\big),$$
where $\alpha$ is the alternating representation.
In fact, the representation 
$\widetilde{\mathsf{V}}$ decomposes as
$$[1^{21}] + \sum_{i=0}^9\, [3\,2^i\,1^{18-2i}]
+2\sum_{j=1}^{10}\,[2^j\, 1^{21-2j}].
$$
We can write the coefficient of 
$L\wedge^2S[12]$ in $H^{12,12}(\M_{2,21})$
discussed in
Section \ref{3h4} as
$$  \sum_{i=0,\,\text{even}}^9\, [3\,2^i\,1^{18-2i}]
+\sum_{j=1}^{10}\,[2^j\, 1^{21-2j}].
$$
We conjecture the 
canonical map
$$ \widetilde{\mathsf{V}} \rightarrow {\mathsf{V}}$$
is simply a projection onto the
above subspace of 
$H^{12,12}(\M_{2,21})$.

\vspace{+20 pt}

\noindent Institutionen f\"or Matematik \hfill Department of Mathematics\\
\noindent Kungliga Tekniska H\"ogskolan \hfill Princeton University\\
\noindent 100 44 Stockholm, Sweden \hfill Princeton, NJ 08544\\
\noindent faber@math.kth.se \hfill rahulp@math.princeton.edu

\end{document}